\title{Two-loop part of the rational homotopy of spaces of long embeddings}
\author{Jim Conant}
\address{University of Tennessee}
\email{jconant@math.utk.edu }
\author{Jean Costello}
\address{University of Minnesota}
\email{coste085@umn.edu}
\author{Victor Turchin}
\address{Kansas State University}
\email{turchin@math.ksu.edu}
\author{Patrick Weed}
\address{University of California, Davis}
\email{psweed@math.ucdavis.edu}
\newtheorem{theorem}{Theorem}[section]
\newtheorem{corollary}[theorem]{Corollary}
\newtheorem{lemma}[theorem]{Lemma}
\newtheorem{definition}[theorem]{Definition}
\newtheorem{remark}[theorem]{Remark}
\newcommand{\otherleavescount}{\sum_{i=1}^{3}\frac{k_i(k_i + 1)}{2}}
\newcommand{\Q}{{\mathbb{Q}}}
\newcommand{\R}{{\mathbb R}}
\newcommand{\calEpi}{{\mathcal E}^{m,N}_\pi}
\newcommand{\calHpi}{{\mathcal H}^{m,N}_\pi}
\newcommand{\Ebarmn}{{\overline{\mathrm{Emb}}}_c(\R^m,\R^N)}
\newcommand{\Embmn}{\mathrm{Emb}_c(\R^m,\R^N)}
\newcommand{\Immmn}{\mathrm{Imm}_c(\R^m,\R^N)}
\newcommand{\im}{\operatorname{im}}
\newcommand{\calA}{\mathcal{A}}
\newcommand{\calB}{\mathcal{B}}
\newcommand{\calC}{\mathcal{C}}
\newcommand{\Sym}{\operatorname{Sym}}
\newcommand{\ASym}{\operatorname{ASym}}
\newcommand{\even}{\mathrm{even}}
\newcommand{\odd}{\mathrm{odd}}
\newcommand\rth{\refstepcounter{equation}}
\newcommand\numb{\rth{\rm \theequation}}
\numberwithin{equation}{section}
\begin{document}

\sloppy

\begin{abstract}
Arone and Turchin defined graph-complexes computing the rational homotopy of the spaces  of long embeddings. The graph-complexes split into a direct sum by the number of loops in graphs. In this paper we compute the homology of its two-loop part.
\end{abstract}

\maketitle

\section{Introduction}\label{s_intro}
Let $\Embmn$ denote the space of smooth embeddings $\R^m\hookrightarrow \R^N$ coinciding with a fixed linear embedding outside a compact subset of $\R^m$. By $\Ebarmn$ we denote the homotopy fiber of the inclusion $\Embmn\hookrightarrow\Immmn$, where $\Immmn$ is the space of immersions with the same behavior at infinity. In~\cite{AT1,AT2} Arone and Turchin study the rational homology and homotopy of $\Embmn$ and $\Ebarmn$. In particular in~\cite{AT2} the authors define graph-complexes $\calEpi$ computing the rational homotopy $\pi_*\Ebarmn\otimes\Q$, $N\geq 2m+2$. The graph-complexes up to a regrading depend only on the parities of $m$ and $N$. Thus there are only 4 cases to consider. The connection between
$\pi_*\Ebarmn\otimes\Q$ and $\pi_*\Embmn\otimes\Q$, $N\geq 2m+2$, is established by~\cite[Corollary~4.3]{AT2}.
The graph-complexes $\calEpi$ split into a direct sum of complexes by the number of univalent vertices (this number is called {\it Hodge degree}) and the number of loops (first Betti number) in the graphs. This paper
computes  the homology of the part of $\calEpi$ spanned by  two-loop graphs. The homology of the part
 spanned by trees and of the part  spanned by one-loop graphs was computed in~\cite[Section~3]{AT2}.

Theorems~\ref{t_homology_m_odd_n_odd}, \ref{t_homology_m_even_n_even}, \ref{t_homology_m_even_n_odd}, \ref{t_homology_m_odd_n_even}
describe generating functions of the homology ranks and of the Euler characteristics in each of the four cases. Theorems~\ref{t_rank_m_odd_n_odd}-\ref{t_rank_m_odd_n_even} give explicit formulae for those ranks. We can make two conclusions from our computations. Firstly, for all parities of $m$ and $N$,  the homology of the two-loop part for any Hodge degree is concentrated in only one grading. Thus the Euler characteristics completely determine the homology ranks. Secondly, the ranks grow linearly with the Hodge degree.

Our method  is borrowed from similar computations of the dimensions of the space of 2-loop uni-trivalent graphs modulo $IHX$ and $AS$ relations~\cite{Nakats}, which appears in the study of finite type knot invariants, see also~\cite{Dasbach,MoskOhtsuki}. The latter space is  the bottom degree part of our homology for the case when both $m$ and $N$ are odd.
The field of coefficients for  the considered complexes  is always $\Q$.


\section{Complexes of uni-$\geq$3-valent graphs}\label{s_graph}
In this section we recall  definition of the complex $\calEpi$ of uni-$\geq$3-valent graphs from~\cite{AT2}. The homology of this complex is naturally isomorphic to~$\pi_*\Ebarmn\otimes\Q$, $N\geq 2m+2$.

The complex $\calEpi$ is spanned by abstract connected graphs having a non-empty set of non-labeled {\it external vertices} of valence~1, and a possibly empty set of non-labeled {\it internal vertices} of valence~$\geq 3$. The graphs are allowed to have edges joining a vertex to itself and multiple edges. For such graph define its {\it orientation set} as the union of the set of its external vertices (considered as elements of degree $-m$), the set of its internal vertices (considered as elements of degree $-N$), and the set of its edges (considered as elements of degree $(N-1)$). By an {\it orientation} of a graph we will understand ordering of its orientation set together with an orientation of all its edges. Two such graphs are {\it equivalent} if there is a bijection between their sets of vertices and edges respecting the adjacency structure of the graphs, orientation of the edges, and the order of the orientation sets. The space of $\calEpi$ is the quotient space of the vector space freely spanned by equivalence classes of such oriented graphs modulo the orientation relations:

\vspace{.2cm}

(1) $\Upsilon_1=(-1)^n\Upsilon_2$ if $\Upsilon_1$ differs from $\Upsilon_2$ by reversing the orientation of an edge.

(2) $\Upsilon_1=\pm\Upsilon_2$, where $\Upsilon_2$ is obtained from $\Upsilon_1$ by a permutation of the orientation set. The sign here is the Koszul sign of the permutation taking into account the degrees of the elements.

\vspace{.2cm}

Notice that if a graph has a symmetry that produces a negative sign, then such a graph is zero modulo orientation relations (1-2).
The total degree of a graph is the sum of degrees of all the elements from its orientation set. The differential $\partial\Upsilon$ of a graph $\Upsilon\in\calEpi$ is defined as the sum of expansions of its internal vertices. An expanded vertex is  replaced  by an edge. The set of edges adjacent to the  expanded vertex splits into two sets -- one containing the edges that go to one vertex of the new edge and the other set containing the edges that go to the other vertex.  An expansion of a vertex of valence $\ell$ is  a sum of $\frac{2^\ell-2\ell-2}{2}=2^{\ell-1}-\ell-1$ graphs obtained in such way. One subtracts  $2\ell+2$ to exclude graphs with internal vertices of valence $<3$, and one divides by 2 because of the symmetry. The orientation set of a new graph is obtained by adding the new vertex and the new edge as the first and second elements to the orientation set, and by orienting the new edge from the old vertex to the new one. There is a freedom which of 2 vertices of the new edge is considered as a new one and which as an old one, but regardless of this choice, the orientation of the boundary graph is the same. All the graphs in the differential appear with positive sign (the sign is hidden in the way we order the orientation set and orient the new edge).

Notice that the differential preserves the number of external vertices $t$ referred as {\it Hodge degree} and also the first Betti number $L$ (number of loops in a graph). The first Betti number $s$ of the graph obtained by gluing together all univalent vertices will be called {\it complexity}. It is also preserved by the differential. One obviously has
$$
L=s-t+1.
$$
The part of $\calEpi$ of Hodge degree $t$ and complexity $s$ is denoted by $\calEpi(s,t)$.

We will also define {\it defect} of a graph as the sum $\sum_v(|v|-3)$, where $v$ runs through the internal vertices of the graph, and $|v|$ is the valence of $v$. The defect measures how much the graph is different from a uni-trivalent one. The differential decreases the defect by one.

\section{Hairy graph-complexes}\label{s_hairy}
In this section we define a quasi-isomorphic subcomplex $\calHpi$ of $\calEpi$ which will require a few definitions.

An edge in a connected graph is called {\it tree-type} if when one removes it, the graph becomes disconnected, whose exactly one connected part is a tree. By the {\it frame} of a graph we will mean its subgraph generated by edges which are not tree-type. Notice that the frame of a connected graph is always connected. A tree-type edge is called a {\it hair} if one of its vertices is univalent and the other one belongs to the frame and has no other tree-type edges adjacent to it. A vertex is called {\it cut} if when one removes it  the graph becomes disconnected.

One defines the {\it complex $\calHpi$ of hairy graphs} as the subcomplex of $\calEpi$ spanned  by trees with $\leq 3$ external vertices and by graphs whose frame has no cut-vertices and whose  tree-type edges are all hairs.

\begin{theorem}\label{t_hairy}
The inclusion $\calHpi\hookrightarrow\calEpi$ is a quasi-isomorphism.
\end{theorem}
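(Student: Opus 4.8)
The plan is to realize $\calHpi$ as the end result of collapsing away the pieces of $\calEpi$ that carry tree-type edges and cut-vertices, using a filtration argument. First I would set up a filtration of $\calEpi$ by the total number of tree-type edges that are \emph{not} hairs, together (secondarily) with the number of cut-vertices of the frame. The differential, being a sum of vertex expansions, can only increase or preserve the number of tree-type edges in a controlled way: expanding an internal vertex of the frame may create one new tree-type edge, while expanding a vertex that already sits on a tree branch keeps the branch structure. So the associated graded differential acts, roughly, by doing nothing to the frame and expanding vertices purely inside the tree-type part of the graph. The point is that on this associated graded complex we are computing, at each ``attaching site'' on the frame, the homology of a complex of rooted trees (or of forests attached at a cut-vertex), and those auxiliary complexes should be acyclic except in the state where every tree-type edge has been reduced to a single hair and no cut-vertices of the frame survive.

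Concretely, the key steps in order are: (i) Prove an acyclicity lemma for the ``local'' tree complex: the complex spanned by rooted trees with a fixed number of external vertices, with differential given by expansions of internal vertices, is acyclic (this is essentially the computation of the homology of the tree part of $\calEpi$ already done in~\cite[Section~3]{AT2}, specialized and rooted; the contracting homotopy contracts the edge at the root or merges a pair of hairs into their nearest common internal vertex). (ii) Handle cut-vertices of the frame similarly: a cut-vertex with several branches hanging off it, or several frame-pieces meeting at it, can be resolved by expansions, and the resulting subcomplex is acyclic unless the vertex is genuinely part of the $2$-connected frame — so the surviving generators are exactly those with $2$-connected (cut-vertex-free) frame. (iii) Assemble (i) and (ii) into the statement that the associated graded inclusion $\mathrm{gr}\,\calHpi \hookrightarrow \mathrm{gr}\,\calEpi$ is a quasi-isomorphism: on the graded pieces where $\calHpi$ lives the two complexes agree, and the complementary graded pieces are acyclic by the local lemmas, with the one exception of the low-complexity trees, which is why the definition of $\calHpi$ explicitly keeps all trees with $\le 3$ external vertices. (iv) Run the spectral sequence of the filtration: since it degenerates after the associated graded identification, the inclusion is a quasi-isomorphism on $\calEpi$ itself. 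One must check the filtration is bounded (in each fixed Hodge degree $t$ and complexity $s$ the number of tree-type edges is bounded, since the defect and the total number of vertices are bounded), so the spectral sequence converges.

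The main obstacle I expect is step (ii), the cut-vertex reduction, and more precisely making the ``local'' complexes genuinely local — i.e.\ showing that the contracting homotopies for the tree branches and for the cut-vertices can be chosen to commute, or at least be combined, so that one gets a single contraction on each non-surviving graded piece rather than a messy nested induction. A cut-vertex may simultaneously be an endpoint of tree-type edges and a separating point of two loop-carrying pieces, so the two reductions interact; the careful bookkeeping is in choosing a total order on the ``reducible features'' of a graph and always contracting the first one, then verifying this is a well-defined chain homotopy. The sign bookkeeping forced by the orientation conventions (ordering of the orientation set, orientation of the new edge in an expansion) is routine but must be tracked to confirm that the homotopy squares to the identity minus the projection onto $\calHpi$. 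A secondary subtlety is the boundary case of trees of complexity $0$: there the frame is empty and ``cut-vertex-free frame'' is vacuous, so one verifies directly that the subcomplex of trees with $\le 3$ external vertices already computes the full tree homology, matching~\cite[Section~3]{AT2}.
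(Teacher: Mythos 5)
Your overall strategy -- isolate the ``separating'' structure by a filtration, show the complementary pieces are acyclic, and let a spectral sequence finish -- is the same shape as the paper's argument, but as written there are two genuine gaps. First, the filtration you propose is not compatible with the differential. The number of tree-type edges that are not hairs can both rise and fall under vertex expansions: take a $\Theta$-frame with a $4$-valent subdivision vertex carrying two tree-type edges to univalent vertices (neither is a hair, since that frame vertex has two tree-type edges adjacent to it); the expansion term that places one frame edge and one of these tree edges at each end of the new, non-separating edge turns both old edges into hairs and drops your primary index from $2$ to $0$, while the term pushing both tree edges onto the new vertex raises it to $3$. So no spectral sequence is attached to this filtration as stated. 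The paper instead writes $\partial=\partial_s+\partial_{ns}$, where $\partial_s$ consists of the expansions of cut vertices that create a \emph{separating} edge, and runs the spectral sequence of that double complex; the relevant monotone quantity is the number of separating edges, not the number of non-hair tree-type edges.

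Second, and more seriously, the acyclicity that carries the whole proof is not actually established. Your step (i) is false as stated: the complex of rooted trees with the expansion differential is not acyclic -- with labelled leaves its homology is the arity-$k$ Lie piece concentrated on binary trees, and with unlabelled leaves the answer depends on the parities of $m$ and $N$ and is nonzero in low arities (which is exactly why hairs survive, and why the tree part of $\calEpi$ has the one-dimensional homology computed in \cite[Section~3]{AT2} -- that computation is not an acyclicity statement, so it cannot be cited for one); moreover the proposed homotopy (``contract the root edge or merge two hairs'') is never checked against the unlabelled-leaf symmetries and orientation signs, and it is precisely at the surviving configurations that such a homotopy fails. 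Step (ii), which you yourself flag as the main obstacle, is in fact the entire content of the theorem in positive loop order: one needs that the subcomplex spanned by graphs having a cut vertex other than a trivalent vertex incident to a hair is acyclic with respect to $\partial_s$. The paper does not re-derive this; it imports the argument of \cite{ConVog} (adapted to hairs, the more general orientation data, and an expanding rather than contracting differential) for that acyclicity, and refers to \cite[Proposition~3.2]{AT2} for the tree part. Without either those inputs or a worked-out replacement for them, your outline does not yet constitute a proof.
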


\begin{proof}
For the part $\calHpi(t+1,t)\hookrightarrow\calEpi(t+1,t)$, i.e. the part which is spanned by trees,  see~\cite[Proposition~3.2]{AT2}. For the case when the first Betti number is positive the proof follows from the argument of~\cite[Theorem~1.1]{ConVog}. In that paper, graphs  do not have hairs, the orientation data is not as general, and the differential contracts, rather than expands edges. However the argument still works with a little modification. One writes $\partial=\partial_s+\partial_{ns}$, where $\partial_s$ expands cut vertices into separating edges and $\partial_{ns}=\partial-\partial_s$.
Consider the subspace of graphs which have a cut vertex which is not a trivalent vertex incident to a hair, which is a complex with respect to $\partial_s$. By the same argument as in that paper, the homology of this complex is trivial. So in the spectral sequence for the double complex, only graphs without cut vertices survive, and the surviving boundary operator $\partial_{ns}$ is the standard  boundary restricted to the hairy graph subcomplex.
\end{proof}

\section{Two-loop part of $\calHpi$}\label{s_2loops}
From Theorem~\ref{t_hairy} one can immediately conclude that the loopless part of $\calHpi$ is always one-dimensional. It is spanned by the tree with two vertices when $m$ and $N$ are of the same parity and by the tree with three external and one internal vertices when $m$ and $N$ are of opposite parity. The differential is obviously zero. The one-loop part of $\calHpi$ is spanned by wheels -- graphs obtained from a circle by attaching several hairs. \cite[Proposition~3.3]{AT2} tells when such wheels survive their dihedral symmetry. The differential is also zero in this situation.

Let $\calC_i$ denote the space of two-loop hairy graphs of defect $i$. Notice that the frame of any such graph has the shape of the $\Theta$-graph. The defect $i$ can be either 0, 1, or 2, see the figure below.

 \vspace{.3cm}

\begin{center}
\psfrag{C2}[0][0][1][0]{$\calC_2$}
\psfrag{C1}[0][0][1][0]{$\calC_1$}
\psfrag{C0}[0][0][1][0]{$\calC_0$}
\includegraphics[width=14cm]{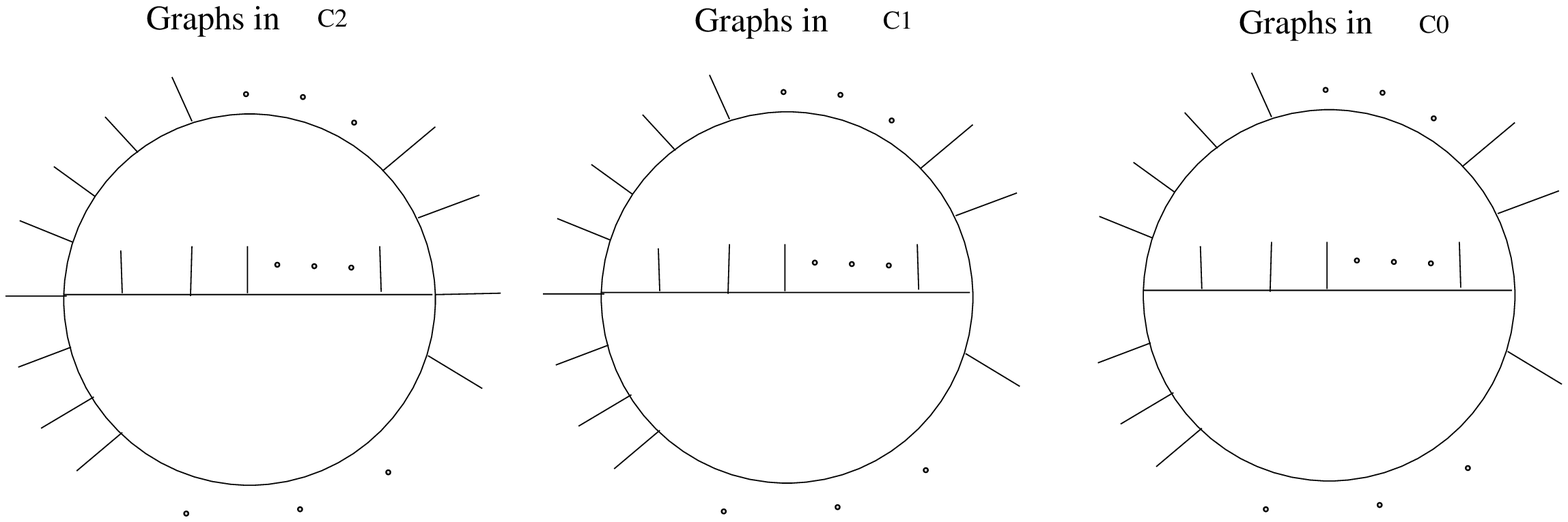}
\end{center}

\vspace{.3cm}

One gets the complex
$$
0 \xrightarrow{\partial _3} \calC_2 \xrightarrow{\partial_2} \calC_1 \xrightarrow{\partial_1}\calC_0 \xrightarrow{\partial_0} 0.
\eqno(\numb)\label{eq:hairy_2loop}
$$
We will see that the differential $\partial_2 \colon \calC_2\to \calC_1$ is injective. Which means that the homology must be concentrated in defect~0 and~1. Let $a_k$, respectively $b_k$, be the rank of the homology of defect~0, respectively~1, in Hodge degree~$k$. Let $\chi_k$ denote the Euler characteristic of~\eqref{eq:hairy_2loop} in Hodge degree $k$. We will compute the generating functions
$$
h_0(t)=\sum_{k=1}^{\infty}a_kt^k,\quad h_1(t)=\sum_{k=1}^\infty b_kt^k,\quad \chi(t)=\sum_{k=1}^{\infty}\chi_kt^k.
$$
Obviously, $h_0$ and $h_1$ depend on the parities of $m$ and $N$. The total degree of a graph with $k$ hairs in $C_0$ is $k(N-m-2)+N-3$. This implies the following lemma.

\begin{lemma}\label{l_euler}
 One has
$$
\chi(t)=(-1)^{N-1}\left[h_0((-1)^{N-m}t)-h_1((-1)^{N-m}t)\right].
$$
\end{lemma}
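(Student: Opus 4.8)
The plan is to compute the Euler characteristic of the complex~\eqref{eq:hairy_2loop} in a fixed Hodge degree $k$ in two ways. On one hand, the complex has length three, $\calC_2 \to \calC_1 \to \calC_0$, so in Hodge degree $k$ the Euler characteristic is the alternating sum of the dimensions of the homology groups: since $\partial_2$ is injective the homology of $\calC_2$ vanishes, and what remains is $\dim H(\calC_0)_k - \dim H(\calC_1)_k = a_k - b_k$, up to the overall sign coming from the cohomological degrees at which $\calC_0$ and $\calC_1$ sit. So $\chi_k = \pm(a_k - b_k)$, and I need only pin down this global sign, together with the sign bookkeeping coming from how the total degree of a graph enters the generating-function substitution $t \mapsto \pm t$.

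First I would fix conventions: the total degree of a graph with $k$ hairs sitting in $\calC_0$ is $k(N-m-2) + N - 3$, as stated just before the lemma. A graph in $\calC_0$ is uni-trivalent (defect $0$), its frame is a $\Theta$-graph; counting vertices and edges in terms of $k$ and using that each expansion of an internal vertex (passing from $\calC_0$ to $\calC_1$, or $\calC_1$ to $\calC_2$) adds one internal vertex and one edge while the defect rises by one, I can write the total degree of a graph with $k$ hairs in $\calC_i$ as $k(N-m-2) + N - 3 + i\cdot\delta$ for the appropriate shift $\delta$; since internal vertices have degree $-N$ and edges degree $N-1$, adding one of each changes the total degree by $-1$, so $\delta = -1$ and the complex is indeed homologically graded with $\calC_2$, $\calC_1$, $\calC_0$ in consecutive total degrees. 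Thus in total degree $d$, the Euler characteristic contribution is $\dim(\calC_0)_d - \dim(\calC_1)_d + \dim(\calC_2)_d$, which by the long exact sequence equals $\dim H(\calC_0)_d - \dim H(\calC_1)_d$ since $H(\calC_2) = 0$.

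Next I would translate this from the total-degree grading to the Hodge grading, which is what $h_0, h_1, \chi$ record. Because the total degree of a Hodge-degree-$k$ class is an affine function of $k$ of the form $(N-m-2)k + (N-3)$ (in $\calC_0$; shifted by $-i$ in $\calC_i$, but the homology lives only in $\calC_0$ and $\calC_1$), the sign $(-1)^{\text{total degree}}$ that enters the definition of Euler characteristic factors as $(-1)^{N-1}\cdot\big((-1)^{N-m}\big)^k$ — here $(-1)^{N-3} = (-1)^{N-1}$ is the constant part and $(-1)^{(N-m-2)k} = (-1)^{(N-m)k}$ is the $k$-dependent part, with an extra sign from the $\calC_1$-versus-$\calC_0$ shift absorbed into the minus sign between $h_0$ and $h_1$. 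Summing $\chi_k t^k = (-1)^{N-1}(-1)^{(N-m)k}(a_k - b_k)t^k$ over $k$ then gives exactly $\chi(t) = (-1)^{N-1}\big[h_0((-1)^{N-m}t) - h_1((-1)^{N-m}t)\big]$, since replacing $t$ by $(-1)^{N-m}t$ in $h_0(t) = \sum a_k t^k$ produces $\sum a_k (-1)^{(N-m)k} t^k$.

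The main obstacle is the sign bookkeeping: one has to be careful that "Euler characteristic of a homologically graded complex" means the alternating sum weighted by the parity of the homological degree, and to check that this matches the parity of the total degree of the graphs — i.e. that $\calC_0, \calC_1, \calC_2$ really sit in total degrees of three consecutive parities, so that the $(-1)^{\deg}$ appearing in $\chi(t)$ is consistent with the $(-1)^{\text{homological degree}}$ appearing in the Euler characteristic of~\eqref{eq:hairy_2loop}. Once the degree count at the start of this section is made precise (using that each internal-vertex expansion shifts total degree by $-1$), everything else is the routine substitution $t \mapsto (-1)^{N-m} t$ described above.
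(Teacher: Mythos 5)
Your argument is correct and is essentially the paper's (very terse) proof spelled out: injectivity of $\partial_2$ makes the defect-2 homology vanish, and the total-degree formula $k(N-m-2)+N-3$ for $\calC_0$ yields the sign $(-1)^{N-1}(-1)^{k(N-m)}$, with the one-step degree shift between $\calC_0$ and $\calC_1$ accounting for the minus sign between $h_0$ and $h_1$. One small slip that does not affect the outcome: the expansion differential goes from $\calC_{i+1}$ to $\calC_i$ (it lowers defect) and the graphs it produces have one more internal vertex and one more edge, so the total degree of $\calC_i$ in Hodge degree $k$ is $k(N-m-2)+N-3+i$ rather than $-i$; since only the parity of this shift enters the Euler characteristic, your final formula is unchanged.
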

\begin{proof}
Follows from injectivity of $\partial_2$, see Section~\ref{s_homology}.
\end{proof}

The main idea for our computations is that the spaces of graphs will be encoded as certain spaces of polynomials.
Denote by $x_1^{k_1}x_2^{k_2}x_3^{k_3}$ the graph from $\calC_2$ that has $k_1$ hairs on the upper edge of $\Theta$, $k_2$ hairs on the middle one, and $k_3$ hairs on the lower one. Its edges are oriented as in the figure below.

 \vspace{.3cm}

\begin{center}
\psfrag{A1}[0][0][1][0]{$A_1$}
\psfrag{A2}[0][0][1][0]{$A_2$}
\psfrag{B1}[0][0][1][0]{$B_1$}
\psfrag{B2}[0][0][1][0]{$B_2$}
\psfrag{a1}[0][0][1][0]{$a_1$}
\psfrag{a2}[0][0][1][0]{$a_2$}
\psfrag{b1}[0][0][1][0]{$b_1$}
\psfrag{b2}[0][0][1][0]{$b_2$}
\psfrag{b3}[0][0][1][0]{$b_3$}
\psfrag{C1}[0][0][1][0]{$C_1$}
\psfrag{C2}[0][0][1][0]{$C_2$}
\psfrag{C3}[0][0][1][0]{$C_3$}
\psfrag{c1}[0][0][1][0]{$c_1$}
\psfrag{c2}[0][0][1][0]{$c_2$}
\psfrag{ck1}[0][0][1][0]{$c_{k_1}$}
\psfrag{Ck1}[0][0][1][0]{$C_{k_1}$}
\psfrag{C1p}[0][0][1][0]{$C_1'$}
\psfrag{C2p}[0][0][1][0]{$C_2'$}
\psfrag{Ck1p}[0][0][1][0]{$C_{k_1}'$}
\psfrag{c1p}[0][0][1][0]{$c_1'$}
\psfrag{c2p}[0][0][1][0]{$c_2'$}
\psfrag{ck1p}[0][0][1][0]{$c_{k_1}'$}
\psfrag{D1}[0][0][1][0]{$D_1$}
\psfrag{D2}[0][0][1][0]{$D_2$}
\psfrag{D3}[0][0][1][0]{$D_3$}
\psfrag{d1}[0][0][1][0]{$d_1$}
\psfrag{d2}[0][0][1][0]{$d_2$}
\psfrag{D1p}[0][0][1][0]{$D_1'$}
\psfrag{D2p}[0][0][1][0]{$D_2'$}
\psfrag{d1p}[0][0][1][0]{$d_1'$}
\psfrag{d2p}[0][0][1][0]{$d_2'$}
\psfrag{E1}[0][0][1][0]{$E_1$}
\psfrag{E2}[0][0][1][0]{$E_2$}
\psfrag{Ek3}[0][0][1][0]{$E_{k_3}$}
\psfrag{e1}[0][0][1][0]{$e_1$}
\psfrag{e2}[0][0][1][0]{$e_2$}
\psfrag{E1p}[0][0][1][0]{$E_1'$}
\psfrag{E2p}[0][0][1][0]{$E_2'$}
\psfrag{e1p}[0][0][1][0]{$e_1'$}
\psfrag{e2p}[0][0][1][0]{$e_2'$}
\includegraphics[width=14cm]{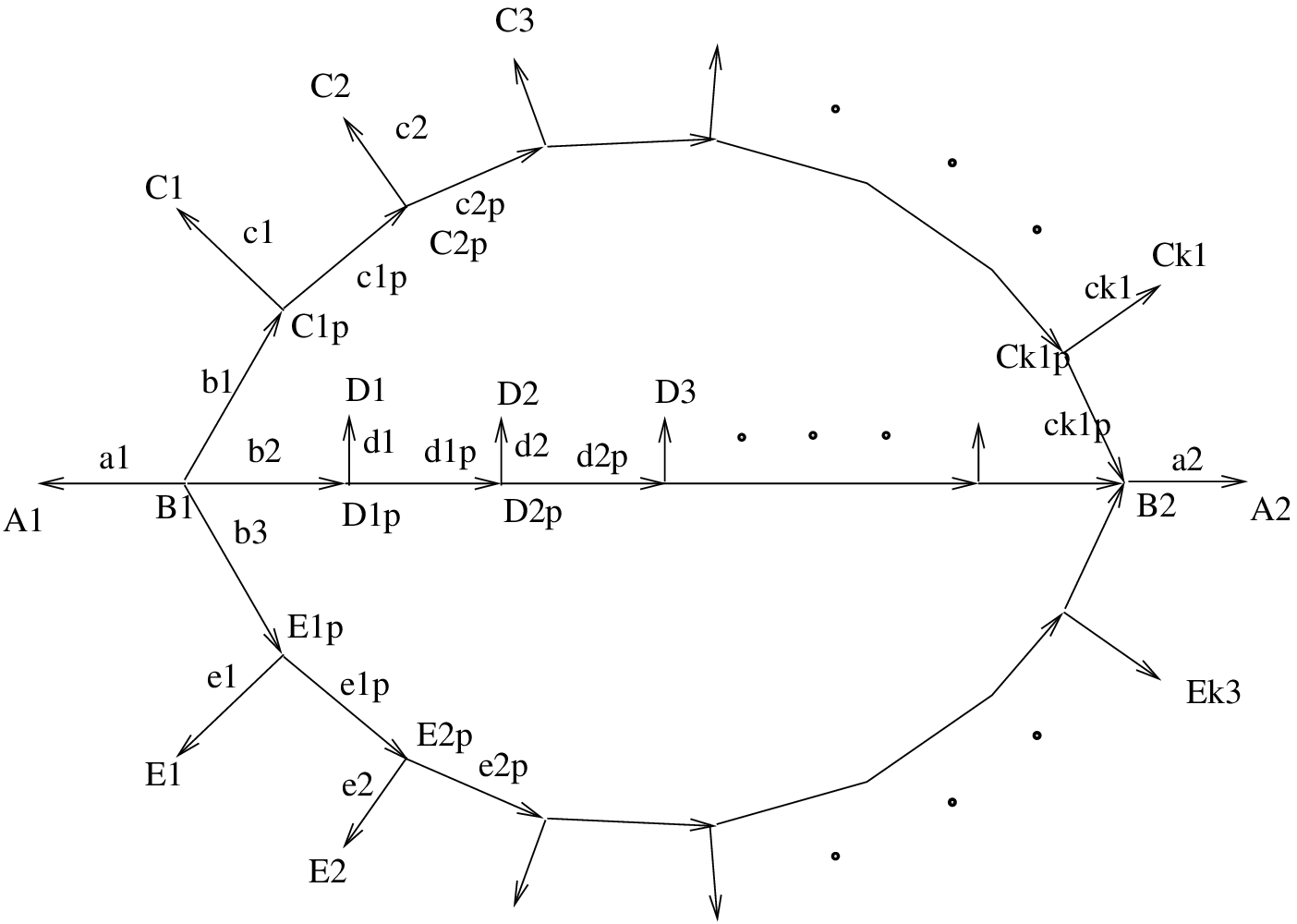}
\end{center}

 The ordering of its orientation set is as follows. First one has $A_1A_2a_1a_2B_1B_2b_1b_2b_3$. Then one puts $(C_1c_1C_1'c_1')(C_2c_2C_2'c_2')\ldots(C_{k_1}c_{k_1}C_{k_1}'c_{k_1}')$, then a similar product of $(D_id_iD_i'd_i')$, $i=1\ldots k_2$, corresponding to the middle edge, then a similar product of $(E_ie_iE_i'e_i')$, $i=1\ldots k_3$.

Abusing notation we will denote by $x_1^{k_1}x_2^{k_2}x_3^{k_3}$ similar graphs in $\calC_1$ and $\calC_0$. The ordering of the orientation sets will start in these cases as $A_1a_1B_1B_2b_1b_2b_3$, $B_1B_2b_1b_2b_3$
respectively.

The two-loop graphs have two possible types of symmetry: symmetry with respect to the vertical line and transpositions of edges in  the $\Theta$-graph.  If a graph has a symmetry that produces a negative sign, it means the graph is zero modulo orientation relations. The lemma below computes the signs arising from these symmetries.

\begin{lemma}\label{l_sign}
(1) The symmetry of the graph $x_1^{k_1}x_2^{k_2}x_3^{k_3}\in C_2$ with respect to the vertical line produces the sign
$$
(-1)^{m+N+1}(-1)^{k_1+k_2+k_3}(-1)^{(m+N)\sum_{i=1}^3\frac{k_i(k_i-1)}2}.
$$

(2) The symmetry of the graph $x_1^{k_1}x_2^{k_2}x_3^{k_3}\in C_0$ with respect to the vertical line produces the sign
$$
(-1)^{k_1+k_2+k_3}(-1)^{(m+N)\sum_{i=1}^3\frac{k_i(k_i-1)}2}.
$$

(3) The graph $x_1^{k_1}x_2^{k_2}x_3^{k_3}\in C_i$, $i=0\ldots 2$, is equal to $x_1^{k_2}x_2^{k_1}x_3^{k_3}$ with the sign
$$
(-1)^{N-1}(-1)^{k_1k_2(m+N)}.
$$
\end{lemma}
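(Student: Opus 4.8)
The plan is to compute, in each of the three cases, the total sign as the product of two pieces: the Koszul sign of the permutation of the orientation set induced by the symmetry, and, for every edge whose direction is reversed by the symmetry, the sign $(-1)^N$ of relation~(1). I will use throughout the observation that this total sign depends only on the graph and the symmetry, not on the chosen ordering of the orientation set nor on the chosen directions of the edges: altering either of these multiplies the oriented graph by a scalar which cancels when the symmetry is applied. Consequently I may reorganise the orientation data into whatever form is most convenient, and I never need the precise conventions of the pictures beyond the combinatorial structure they encode.

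I would treat Part~(3) first, as it is the simplest. The isomorphism that interchanges the first two edges of the $\Theta$-frame fixes the leading part of the orientation set ($B_1,B_2$, and, for $\calC_1$ and $\calC_2$, the vertices and edges of the direct hairs), transposes $b_1$ with $b_2$, and transposes the block of $k_1$ quadruples $(C_ic_iC_i'c_i')$ with the block of $k_2$ quadruples $(D_id_iD_i'd_i')$; no edge direction is reversed. Each quadruple has total degree $\equiv N+m\pmod 2$, so the block transposition contributes $(-1)^{k_1k_2(m+N)}$, while the adjacent transposition of the degree-$(N-1)$ elements $b_1,b_2$ contributes $(-1)^{(N-1)^2}=(-1)^{N-1}$. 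These two moves lie in disjoint ranges of the orientation set, so they simply multiply, giving $(-1)^{N-1}(-1)^{k_1k_2(m+N)}$; the direct hairs are irrelevant here, so the same argument handles $\calC_0$, $\calC_1$ and $\calC_2$ at once.

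Next I would deduce Part~(1) from Part~(2). A graph $x_1^{k_1}x_2^{k_2}x_3^{k_3}\in\calC_2$ is the corresponding graph of $\calC_0$ with one extra hair attached at each of the two trivalent vertices $B_1,B_2$ of the frame, and the vertical reflection interchanges these two hairs. On the orientation set this only permutes the leading block $A_1A_2a_1a_2$: it transposes the external vertices $A_1\leftrightarrow A_2$ and the edges $a_1\leftrightarrow a_2$, contributing $(-1)^{m^2}(-1)^{(N-1)^2}=(-1)^{m+N+1}$, and it reverses no edge direction; on the rest of the orientation data it acts exactly as the vertical reflection of the underlying $\calC_0$-graph, reversing the same edges. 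Hence the sign in~(1) equals $(-1)^{m+N+1}$ times the sign in~(2).

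The real work is Part~(2), and the one genuine difficulty is the off-by-one mismatch on each edge of $\Theta$: an edge carrying $k_j$ hairs is subdivided into $k_j+1$ segments, so in the presentation of Section~\ref{s_2loops} the vertical reflection does not act blockwise — the segments $b_j$ sit at the front, and each quadruple $(C_ic_iC_i'c_i')$ mixes a ``hair-triple'' with a segment. To get around this I would first reorder the orientation set so that, for each edge $j$ of $\Theta$, the $k_j$ hair-triples $(C_ic_iC_i')$ come first, in order, followed by the $k_j+1$ segment edges ($b_j$ together with the $c_i'$), again in order. In this presentation the vertical reflection decouples into independent pieces occupying disjoint ranges: (i) the transposition $B_1\leftrightarrow B_2$, contributing $(-1)^{N^2}=(-1)^N$; (ii) for each $j$, the order-reversal of the $k_j$ hair-triples, each of total degree $\equiv m+1$, contributing $(-1)^{(m+1)\binom{k_j}{2}}$; (iii) for each $j$, the order-reversal of the $k_j+1$ segments, each of degree $\equiv N+1$, contributing $(-1)^{(N+1)\binom{k_j+1}{2}}$; and (iv) the reversal of the direction of each of those $k_j+1$ segments, contributing $(-1)^{N(k_j+1)}$. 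Multiplying these and reducing the exponent
$$
N+\sum_{j=1}^{3}\left[(m+1)\binom{k_j}{2}+(N+1)\binom{k_j+1}{2}+N(k_j+1)\right]
$$
modulo $2$ collapses it to $k_1+k_2+k_3+(m+N)\sum_{j=1}^{3}\binom{k_j}{2}$, which is the claimed exponent. So the lemma is entirely a matter of bookkeeping Koszul and edge-reversal signs, and the only step where care is really needed is the passage, in Part~(2), to the ``hair-triples first, segments second'' form of the orientation set, after which the reflection splits into independent blocks and the exponent count becomes mechanical.
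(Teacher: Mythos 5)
Your proposal is correct and follows essentially the same route as the paper: itemize the Koszul sign contributions (the $A$/$a$/$B$ transpositions, the order-reversal of the hair-triples and of the edge segments, the block transposition for the edge swap) together with the $(-1)^N$ per reversed edge, and reduce the exponent mod~2. Your extra care in justifying the blockwise computation by reordering the orientation set, and your deduction of (1) from (2) rather than the reverse, are refinements of the same argument, and your arithmetic checks out against the stated signs.
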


\begin{proof}
Notice that the orientation of $x_1^{k_1}x_2^{k_2}x_3^{k_3}$ that we defined matters only for~(3). In the latter case the sign $(-1)^{N-1}$ arises from the transposition of $b_1$ and $b_2$. The sign $(-1)^{k_1k_2(m+N)}$ is due to the permutation of the factors $(D_id_iD_i'd_i')$ with $(E_je_jE_j'e_j')$.

The sign in (1) is the product of
\begin{itemize}
\item $(-1)^m(-1)^{N-1}(-1)^N$ -- corresponding to the transpositions of $A_1$ with $A_2$, $a_1$ with $a_2$, $B_1$ with $B_2$;
\item $(-1)^{(N-1)\sum_{i=1}^3\frac {k_i(k_i+1)}2}$ -- corresponding to permutation of $b_i$'s, $c_i'$'s, $d_i'$'s, $e_i'$'s;
\item $(-1)^{(m+1)\sum_{i=1}^3\frac {k_i(k_i-1)}2}$ -- corresponding to permutation of $C_i$'s, $c_i$'s, $C_i'$'s, $D_i$'s, $d_i$'s, $D_i'$'s, $E_i$'s, $e_i$'s, $E_i'$'s;
\item $(-1)^{N(k_1+k_2+k_3+3)}$ -- corresponding to the change of orientation of edges.
\end{itemize}

The sign in (2) is the same as in (1) except that one should not count the transpositions of $A_1$ with $A_2$ and of $a_1$ with $a_2$.
\end{proof}

\section{two-loop homology computations}\label{s_homology}

\subsection{When both $N$ and $m$ are odd}
Let $\Q[x_1,x_2,x_3]$ denote the the free commutative algebra generated by $x_1$, $x_2$, $x_3$.

\begin{definition}
We say that a  polynomial  is even (odd, respectively) if all of its monomials are of even (odd, respectively) degree.
\end{definition}

In the previous section we encoded two-loop  graphs by monomials $x_1^{k_1}x_2^{k_2}x_3^{k_3}$. The symmetric group $S_3$ acts on the space $\Q[x_1,x_2,x_3]$ by permuting the variables $x_1$, $x_2$, $x_3$, which geometrically correspond to the reordering of the edges in the theta-graph. Lemma~\ref{l_sign}~(3) tells us what sign should be used to obtain an equivalent monomial-graph. Instead of taking a quotient by this action we will be taking invariants since in characteristic zero, invariants coincide with coinvariants.
When both $m$ and $N$ are odd, a graph with $k_i$, $i=1\ldots 3$, hairs respectively, will be denoted instead by $\frac 16\sum_{\sigma\in S_3}x_{\sigma(1)}^{k_1}x_{\sigma(2)}^{k_2}x_{\sigma(3)}^{k_3}$. It follows from  Lemma~\ref{l_sign} that the space $\calC_2$ in this case is represented by odd symmetric polynomials, the space $\calC_1$ is represented by the space of all symmetric polynomials, and the space $\calC_0$ is  represented by even symmetric polynomials whose all monomials are of strictly positive  degree:
$$
0 \xrightarrow{\partial _3} \Sym^{\mathrm{\odd}}[x_1, x_2, x_3] \xrightarrow{\partial_2} \Sym[x_1, x_2, x_3] \xrightarrow{\partial_1} \Sym^{\mathrm{\even}}_{>0}[x_1, x_2, x_3] \xrightarrow{\partial_0} 0.
$$


The following result is well known.

\begin{theorem}
The algebra $\Sym[x_1, x_2, x_3]$ is isomorphic to the free commutative algebra generated by three letters: $\Q[e_1, e_2, e_3]$, where  $e_1 = x_1 + x_2 + x_3$, $e_2 = x_1x_2 + x_1x_3 + x_2x_3$, and $e_3 = x_1x_2x_3$.
\end{theorem}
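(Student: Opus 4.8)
This is the fundamental theorem of symmetric polynomials, and the plan is the classical leading-term argument. First I would equip $\Q[x_1,x_2,x_3]$ with the lexicographic monomial order in which $x_1>x_2>x_3$; this order is multiplicative, so the leading monomial of a product is the product of the leading monomials. The leading monomials of $e_1$, $e_2$, $e_3$ are $x_1$, $x_1x_2$, $x_1x_2x_3$ respectively, hence the leading monomial of $e_1^{c_1}e_2^{c_2}e_3^{c_3}$ is $x_1^{c_1+c_2+c_3}x_2^{c_2+c_3}x_3^{c_3}$. Let $\phi\colon\Q[e_1,e_2,e_3]\to\Sym[x_1,x_2,x_3]$ be the substitution homomorphism; one must show it is bijective.

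\emph{Surjectivity.} Since a symmetric polynomial is a sum of its homogeneous components, each of which is again symmetric, it suffices to treat a nonzero homogeneous symmetric $p$ of degree $d$, and I would induct on its leading monomial, the induction being well-founded since only finitely many monomials have degree $d$. Write the leading monomial of $p$ as $x_1^{a_1}x_2^{a_2}x_3^{a_3}$. Because $p$ is symmetric, every permutation of this monomial occurs in $p$ with the same coefficient, and the lexicographically largest among them has $a_1\ge a_2\ge a_3$; so the leading monomial already has weakly decreasing exponents. Then $e_1^{a_1-a_2}e_2^{a_2-a_3}e_3^{a_3}$ is homogeneous of degree $(a_1-a_2)+2(a_2-a_3)+3a_3=d$ with leading monomial exactly $x_1^{a_1}x_2^{a_2}x_3^{a_3}$, so subtracting the appropriate scalar multiple of its image under $\phi$ from $p$ yields a homogeneous symmetric polynomial of degree $d$ with strictly smaller leading monomial (or zero). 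By induction $p$ lies in the image of $\phi$.

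\emph{Injectivity.} The map $(c_1,c_2,c_3)\mapsto(c_1+c_2+c_3,\,c_2+c_3,\,c_3)$ on exponent vectors is injective, with inverse $c_3=r$, $c_2=q-r$, $c_1=p-q$, so distinct monomials in $e_1,e_2,e_3$ have distinct leading monomials in $x_1,x_2,x_3$. Hence in a nonzero element of $\Q[e_1,e_2,e_3]$ the summand whose leading monomial is lexicographically largest contributes a term that no other summand can cancel, so its image under $\phi$ is nonzero. Thus $\phi$ is injective, and together with surjectivity it is the claimed isomorphism. The only subtlety is the termination of the reduction in the surjectivity step, which is secured by first reducing to the homogeneous case and then invoking finiteness of the set of monomials of a fixed total degree together with the fact that each reduction strictly lowers the leading monomial in the lexicographic order.
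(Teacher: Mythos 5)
Your argument is correct: it is the classical leading-term proof of the fundamental theorem of symmetric polynomials, specialized to three variables, and both halves are sound --- the surjectivity reduction terminates because you first pass to a fixed homogeneous degree, and injectivity follows from the triangular, invertible correspondence $(c_1,c_2,c_3)\mapsto(c_1+c_2+c_3,\,c_2+c_3,\,c_3)$ of exponent vectors, which makes the maximal leading monomial of a nonzero combination uncancellable. There is nothing in the paper to compare against: the authors simply state this as a well-known result and give no proof, immediately using the basis $e_1^\alpha e_2^\beta e_3^\gamma$ and the degree count $\alpha+2\beta+3\gamma$ in the subsequent corollary and homology computations; your write-up supplies exactly the standard argument they are implicitly invoking, and in particular your observation that distinct $(c_1,c_2,c_3)$ give distinct leading monomials is what justifies treating the products $e_1^\alpha e_2^\beta e_3^\gamma$ as a linear basis, which is the form of the statement the paper actually relies on.
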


So, we have as a basis for $\Sym[x_1, x_2, x_3]$ polynomials of the form $e_1^\alpha e_2^\beta e_3^\gamma$, where $\alpha , \beta , \gamma \in \mathbb{N}_0$. Also, every such symmetric polynomial has degree $\alpha + 2\beta + 3\gamma$, since $e_1$ is of degree 1, $e_2$ is of degree 2, and $e_3$ is of degree 3. Our next corollary follows, which we use freely throughout this section.

\begin{corollary}
$$
\Sym^{\mathrm{\even}}[x_1, x_2, x_3] = \langle e_1^\alpha e_2^\beta e_3^\gamma |\, \alpha + \gamma \equiv 0 \,\mathrm{mod}\, 2\rangle
$$
and
$$
\Sym^{\mathrm{\odd}}[x_1, x_2, x_3] = \langle e_1^\alpha e_2^\beta e_3^\gamma |\, \alpha + \gamma \equiv 1 \,\mathrm{mod}\, 2\rangle.
$$
\end{corollary}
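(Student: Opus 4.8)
The plan is to reduce the statement to a parity bookkeeping on the basis furnished by the preceding theorem. First I would record the elementary observation already noted in the text: the basis monomial $e_1^\alpha e_2^\beta e_3^\gamma$ is a \emph{homogeneous} symmetric polynomial of degree $\alpha + 2\beta + 3\gamma$, and since $2\beta \equiv 0$ and $3\gamma \equiv \gamma \pmod 2$, this degree is congruent to $\alpha + \gamma \pmod 2$. Hence $e_1^\alpha e_2^\beta e_3^\gamma$ lies in $\Sym^{\even}[x_1,x_2,x_3]$ exactly when $\alpha + \gamma$ is even, and in $\Sym^{\odd}[x_1,x_2,x_3]$ exactly when $\alpha + \gamma$ is odd. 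This already gives the inclusions $\supseteq$ in both displayed equations.

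For the reverse inclusions I would use that $\{e_1^\alpha e_2^\beta e_3^\gamma\}_{\alpha,\beta,\gamma \in \mathbb{N}_0}$ is a basis of $\Sym[x_1,x_2,x_3]$ and that, since each of these monomials is homogeneous, this basis is partitioned by degree. Take $f \in \Sym^{\even}[x_1,x_2,x_3]$ and write it uniquely as $f = \sum c_{\alpha\beta\gamma}\, e_1^\alpha e_2^\beta e_3^\gamma$. Collecting the terms of a fixed degree $d$, the degree-$d$ homogeneous component of $f$ equals $\sum_{\alpha + 2\beta + 3\gamma = d} c_{\alpha\beta\gamma}\, e_1^\alpha e_2^\beta e_3^\gamma$; it is a symmetric polynomial whose monomials in $x_1,x_2,x_3$ all have degree $d$. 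Because $f$ is even, its homogeneous components of odd degree vanish, so by the uniqueness of the expansion $c_{\alpha\beta\gamma} = 0$ whenever $\alpha + 2\beta + 3\gamma$ is odd, i.e. whenever $\alpha + \gamma$ is odd. Therefore $f \in \langle e_1^\alpha e_2^\beta e_3^\gamma \mid \alpha + \gamma \equiv 0 \bmod 2\rangle$, proving the first equality. The second is obtained by the identical argument with the roles of the two parities interchanged (equivalently, by observing that $\Sym[x_1,x_2,x_3] = \Sym^{\even}[x_1,x_2,x_3] \oplus \Sym^{\odd}[x_1,x_2,x_3]$ and that the two subspaces on the right-hand sides are complementary).

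I do not expect a genuine obstacle here; the one point requiring a little care is that "even/odd polynomial" was \emph{defined} in terms of the monomials in $x_1,x_2,x_3$, not in terms of the generators $e_i$, so the argument must pass through the homogeneous decomposition before invoking uniqueness of the $e_1^\alpha e_2^\beta e_3^\gamma$-expansion rather than trying to read off parity directly from the exponents of the $e_i$.
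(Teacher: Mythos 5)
Your argument is correct and is essentially the paper's own (the corollary is stated there as an immediate consequence of the observation that $e_1^\alpha e_2^\beta e_3^\gamma$ is homogeneous of degree $\alpha+2\beta+3\gamma$, whose parity is that of $\alpha+\gamma$). Your additional care in passing through the homogeneous decomposition and the uniqueness of the expansion in the basis $\{e_1^\alpha e_2^\beta e_3^\gamma\}$ just fills in the details the paper leaves implicit.
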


Now we will describe the differentials. Let $A\colon\Q[x_1,x_2,x_3]\to \Q[x_1,x_2,x_3]$ be the automorphism of $\Q[x_1,x_2,x_3]$ sending each generator $x_i$ to $-x_i$, $i=1\ldots 3$. Geometrically this automorphism corresponds to the symmetry with respect to the vertical line that our hairy $\Theta$-graphs have. Notice that the projection to the space of even, respectively odd, monomials is described by $f\mapsto \frac 12(f+A(f))$, respectively $f\mapsto \frac 12 (f-A(f))$. Sometimes the even, respectively odd, part of $f$ will also be denoted by $[f]_{\even}$, respectively $[f]_{\odd}$.

\begin{lemma}\label{l_diff_m_odd_n_odd}
In the case both $m$ and $N$ are odd, the differentials of the hairy two-loop graph-complex are defined as follows
\begin{align*}
\partial_2f&=-2e_1f,\\
\partial_1f&=-\frac 12(e_1f+A(e_1f))=-[e_1f]_{\even}.
\end{align*}
\end{lemma}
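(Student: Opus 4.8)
The plan is to unwind the definition of the differential $\partial$ (expansion of internal vertices) from Section~\ref{s_graph} in the very restricted situation of two-loop hairy graphs, keeping careful track of the orientation data fixed in Section~\ref{s_2loops}. First I would treat $\partial_2\colon\calC_2\to\calC_1$. A defect-$2$ graph $x_1^{k_1}x_2^{k_2}x_3^{k_3}$ has a single four-valent internal vertex (the rest being trivalent), namely one of the two vertices of the $\Theta$-frame, which carries the three frame-edges plus one hair. Its expansion replaces this vertex by an edge, separating the four incident edges into two groups of two in all admissible ways; after excluding the configurations creating a $<3$-valent vertex, the only surviving term is the one that splits off the hair together with one new trivalent vertex, i.e.\ the hair gets ``pushed onto the new edge.'' Geometrically this produces exactly the graph with one more hair somewhere on the $\Theta$; summing over the two choices of which frame-vertex is four-valent and over which frame-edge the new hair lands on, one gets, in polynomial language, multiplication by $x_1+x_2+x_3=e_1$. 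The factor $-2$ comes from combining the sign bookkeeping dictated by the ordering convention of the orientation set (the new vertex and new edge are prepended, the new edge is oriented from old to new) with Lemma~\ref{l_sign}: because $m,N$ are odd, reflecting the newly created hair through the vertical line and symmetrizing contributes the overall constant, and the two ways of realizing each monomial ($4$-valent vertex on the left vs.\ right of the $\Theta$) contribute equal terms, doubling the count. So I would verify on a single representative monomial that $\partial_2(x_1^{k_1}x_2^{k_2}x_3^{k_3})=-2e_1\cdot x_1^{k_1}x_2^{k_2}x_3^{k_3}$ after symmetrization, and then note that this formula is $S_3$-equivariant and respects the parity grading ($e_1$ is odd, so it sends odd symmetric polynomials to all symmetric polynomials), hence extends to the claimed formula on all of $\calC_2$.

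Next I would do $\partial_1\colon\calC_1\to\calC_0$ the same way. A defect-$1$ graph in $\calC_1$ has one four-valent vertex; the relevant subtlety is that now this vertex may be either a $\Theta$-vertex with a hair or (if we think of how $\calC_1$ sits inside the complex) the unique vertex of larger valence, and its expansion again only admits one term after discarding valence-$<3$ configurations. Carrying this out yields multiplication by $e_1$ composed with projection onto the even part, which is the content of the formula $\partial_1 f=-\tfrac12(e_1f+A(e_1f))=-[e_1f]_{\even}$: the projection onto $\calC_0=\Sym^{\even}_{>0}$ is forced because $\calC_0$ consists of even polynomials, and algebraically it is exactly $f\mapsto\tfrac12(f+A(f))$ as recalled just above the lemma. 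The sign and the normalization $-\tfrac12$ (as opposed to $-2$) come out of the same orientation-set bookkeeping, with the numerical coefficient differing because in $\calC_1$ there is only one hair already adjacent structure being reflected, so the symmetrization does not produce a doubling but rather the usual $\tfrac12$ from the even-projection; I would pin this down by computing $\partial_1$ on one monomial and matching signs against Lemma~\ref{l_sign}(2), which governs the vertical-line symmetry in $\calC_0$.

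The main obstacle I expect is purely the sign/coefficient bookkeeping: the statement is essentially combinatorially forced once one knows ``expansion of the unique higher-valent vertex pushes a hair along the frame,'' but getting the constants $-2$ and $-\tfrac12$ and the sign $-1$ correct requires careful use of the ordering convention for orientation sets (new vertex and new edge prepended, new edge oriented old$\to$new), the Koszul signs from the degrees $-m$, $-N$, $N-1$, and consistency with Lemma~\ref{l_sign}. The cleanest way to organize this is: (i) fix one monomial, say $x_1^{k_1}x_2^{k_2}x_3^{k_3}$ with all $k_i$ distinct so no internal symmetry intervenes; (ii) list every term of $\partial$ of that monomial as an oriented graph with its explicit orientation set; (iii) re-express each such oriented graph in the standard form of Section~\ref{s_2loops} by permuting the orientation set and reorienting edges, recording the Koszul sign each time; (iv) sum, symmetrize over $S_3$, and read off the coefficient of $e_1$ times the monomial. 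Everything else (equivariance, parity, extension from monomials to the whole space by linearity) is formal. I would present the argument at the level of ``the expansion has a unique surviving term, equal to attaching a new hair; the coefficient is computed from the orientation conventions as in Lemma~\ref{l_sign}, giving $-2e_1f$ on $\calC_2$ and $-[e_1f]_{\even}$ on $\calC_1$,'' and only spell out the sign computation for $\partial_2$ in detail, since $\partial_1$ is identical with one fewer pair of vertices to transpose.
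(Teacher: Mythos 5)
There is a genuine structural error in your treatment of $\partial_2$. A graph in $\calC_2$ does \emph{not} have a single four-valent internal vertex: the defect is $\sum_v(|v|-3)=2$, and since a frame vertex carrying two tree-type edges is excluded from the hairy subcomplex (neither edge would then be a hair in the sense of Section~\ref{s_hairy}), the only defect-two hairy two-loop graphs are those with one hair attached at \emph{each} of the two $\Theta$-vertices, i.e.\ with two four-valent vertices. This is exactly what the orientation data $A_1A_2a_1a_2B_1B_2\ldots$ and Lemma~\ref{l_sign}(1) encode (the vertical flip transposes $A_1\leftrightarrow A_2$ and $a_1\leftrightarrow a_2$), and it is the actual source of the coefficient $2$: the differential expands either of the two four-valent vertices, and the two resulting terms coincide after applying the vertical symmetry. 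Your mechanism for the $2$ --- ``two ways of realizing each monomial (four-valent vertex on the left vs.\ right of the $\Theta$)'' --- does not work: the differential of a fixed graph is a sum over expansions of that graph's own vertices, and identifying isomorphic presentations of the graph does not double its boundary. With the picture you describe (a single four-valent vertex) the computation would give $\pm e_1 f$, not $\pm 2e_1 f$; in fact the graphs you describe are the elements of $\calC_1$, not $\calC_2$, so the argument as written conflates the two spaces.

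Your account of $\partial_1$ is essentially the paper's: a $\calC_1$ graph has exactly one four-valent vertex (the left $\Theta$-vertex with its hair), its expansion pushes that hair onto one of the three frame edges, which in the polynomial encoding is multiplication by $e_1=x_1+x_2+x_3$, and rewriting the result in the encoding of $\calC_0$ (where the vertical flip is an automorphism killing odd monomials when $m,N$ are odd) is precisely the even projection $\tfrac12\bigl(e_1f+A(e_1f)\bigr)$. Two corrections there: the expansion of a four-valent vertex has \emph{three} surviving terms (one per frame edge), not one, which is what produces $e_1$; and note that the paper deliberately does not carry out the sign bookkeeping at all (a footnote states that the overall signs and even the constant are irrelevant to the homology computation), so your step (i)--(iv) plan is more than is needed, but it cannot be executed as written until the structure of $\calC_2$ and hence the origin of the factor $2$ are corrected.
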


\begin{proof}
The coefficient 2 in the definition of $\partial_2$ is due to the fact that there are two 4-valent vertices to expand, which produce the same result due to the symmetry. Such an expansion produces a new hair on one of the three edges of $\Theta$ which is algebraicly expressed as a multiplication by
$e_1=x_1+x_2+x_3$. The sign minus\footnote{These signs as well as the coefficient 2 are not important for the homology computations. For this reason we do not show how these signs are calculated.} in both
cases is due to the rule of signs for the differential, see Section~\ref{s_graph}. See also
Remark~\ref{r_diff_general} where general formulas for the differentials in all the four cases are given.
\end{proof}

We now wish to compute homology. Notice
$$
H_i := \frac{\ker (\partial_i)}{\im (\partial_{i+1})}.
$$
Since $\partial_0 : C_0 \rightarrow 0$,
$$
 ker(\partial _0) = \langle e_1 ^\alpha e_2 ^\beta e_3 ^\gamma |\, \alpha + \gamma \equiv 0 \,\mathrm{mod}\, 2,\, \alpha+\beta +\gamma>0\rangle.
$$
Secondly,  we have
$$
\im(\partial _1) = \langle e_1 ^{\alpha + 1} e_2 ^\beta e_3 ^\gamma |\,  \alpha + \gamma \equiv 1  \,\mathrm{mod}\, 2\rangle.
$$
So, we have
\begin{align*}
H_0  &= \frac{\ker(\partial _0)}{\im(\partial _1)} \\
&= \frac{\langle e_1 ^ \alpha e_2 ^\beta e_3 ^\gamma |\, \alpha + \gamma \equiv 0 \,\mathrm{mod}\, 2,\, \alpha+\beta+\gamma>0\rangle}{\langle e_1 ^{\alpha + 1} e_2 ^\beta e_3 ^ \gamma |\, \alpha + \gamma \equiv 1 \,\mathrm{mod}\, 2\rangle} \\
&= \langle e_2 ^\beta e_3 ^ \gamma |\, \gamma \equiv 0 \,\mathrm{mod}\, 2,\, \beta+\gamma>0\rangle.
\end{align*}

We will now compute $H_1$. We have
$$
ker(\partial_1) = \langle e_1 ^ \alpha e_2 ^\beta e_3 ^\gamma |\, \alpha + \gamma \equiv 0 \,\mathrm{mod}\, 2\rangle.
$$
In other words this kernel consists of symmetric even polynomials.
Similarly we have
$$
\im(\partial_2) =  \langle e_1 ^ {\alpha + 1} e_2 ^\beta e_3 ^\gamma |\, \alpha + \gamma \equiv 1 \,\mathrm{mod}\, 2\rangle.
$$
Thus,
\begin{align*}
H_1 &= \frac{\langle e_1 ^ \alpha e_2 ^\beta e_3 ^\gamma |\, \alpha + \gamma \equiv 0\ \,\mathrm{mod}\,\ 2\rangle}{\langle e_1 ^{\alpha + 1} e_2 ^\beta e_3 ^ \gamma |\, \alpha + \gamma \equiv 1 \,\mathrm{mod}\,\ 2\rangle} \\
&= \langle e_2 ^\beta e_3 ^ \gamma |\, \gamma \equiv 0 \,\mathrm{mod}\, 2 \rangle.
\end{align*}


We see that $H_0 = \mathbb{Q}_{>0} [e_2, e_3 ^2]$ and $H_1 = \mathbb{Q} [e_2, e_3 ^2]$. As a consequence we get the following result.

\begin{theorem}\label{t_homology_m_odd_n_odd}
In case both $m$ and $N$ are odd, the generating functions of the  dimensions of the two-loop homology $H_0$ and $H_1$, and of the Euler characteristics,  are
\begin{align*}
h_0(t)& = \frac{1}{(1 - t^2)(1 - t^6)} - 1,\\
h_1(t)& = \frac{t}{(1 - t^2)(1 - t^6)},\\
\chi (t)& = \frac{1}{(1 + t)(1 - t^6)} - 1.
\end{align*}
\end{theorem}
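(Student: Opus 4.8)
The plan is to convert the explicit descriptions $H_0=\Q_{>0}[e_2,e_3^2]$ and $H_1=\Q[e_2,e_3^2]$ obtained just above into Hilbert series with respect to the Hodge degree, and then to read off $\chi(t)$ from Lemma~\ref{l_euler}. The only point that requires care is the translation between the \emph{polynomial degree} of a class $e_2^\beta e_3^{2\delta}$ and the \emph{Hodge degree} (the number of hairs) of the hairy $\Theta$-graph it represents, a relation that differs between $\calC_0$ and $\calC_1$.

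First I would pin down the two gradings. A monomial $x_1^{k_1}x_2^{k_2}x_3^{k_3}$ standing for a graph in $\calC_0$ is uni-trivalent, so its external vertices are precisely the $k_1+k_2+k_3$ hair endpoints; hence on $\calC_0$, and in particular on $H_0$, the Hodge degree equals the polynomial degree. Now $\partial_2$ and $\partial_1$ are, up to a scalar and a projection, multiplication by $e_1$, which raises the polynomial degree by one, whereas every differential preserves the Hodge degree; consequently a degree-$d$ monomial in $\calC_1$ represents a graph of Hodge degree $d+1$ (geometrically, it carries one additional hair at the surviving $4$-valent vertex), and a degree-$d$ monomial in $\calC_2$ represents a graph of Hodge degree $d+2$.

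Next, the generating functions. Since $e_2$ has degree $2$ and $e_3^2$ has degree $6$, the graded vector space $\Q[e_2,e_3^2]$ has Hilbert series $\frac1{(1-t^2)(1-t^6)}$. For $H_0=\Q_{>0}[e_2,e_3^2]$ the Hodge and polynomial degrees agree and we discard the one-dimensional degree-$0$ summand, giving $h_0(t)=\frac1{(1-t^2)(1-t^6)}-1$. For $H_1=\Q[e_2,e_3^2]$ the Hodge degree exceeds the polynomial degree by one, so $h_1(t)=t\cdot\frac1{(1-t^2)(1-t^6)}$, with nothing removed since the class $1$ already lives in Hodge degree $1$. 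Finally, the hypothesis of Lemma~\ref{l_euler}, injectivity of $\partial_2\colon f\mapsto -2e_1f$, holds because $\Sym[x_1,x_2,x_3]$ is an integral domain; and with $m$ and $N$ both odd we have $(-1)^{N-1}=1$ and $(-1)^{N-m}=1$, so Lemma~\ref{l_euler} yields
\[
\chi(t)=h_0(t)-h_1(t)=\frac{1-t}{(1-t^2)(1-t^6)}-1=\frac1{(1+t)(1-t^6)}-1,
\]
using $1-t^2=(1-t)(1+t)$.

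I do not anticipate a genuine obstacle: the substantive work --- computing $\ker\partial_i$ and $\im\partial_{i+1}$ and identifying the quotients with $\Q_{>0}[e_2,e_3^2]$ and $\Q[e_2,e_3^2]$ --- is already in hand, and what remains is elementary bookkeeping together with an algebraic simplification. The single place where an error could slip in is precisely the degree shift in $\calC_1$: omitting the extra hair would give $h_1(t)=\frac1{(1-t^2)(1-t^6)}$, which is also incompatible with Lemma~\ref{l_euler}. Matching the low-degree coefficients of $h_0$, $h_1$ and $\chi$ against the identity $\chi=h_0-h_1$ is a cheap final sanity check I would run.
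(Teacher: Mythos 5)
Your proposal is correct and follows essentially the same route as the paper: it converts $H_0=\Q_{>0}[e_2,e_3^2]$ and $H_1=\Q[e_2,e_3^2]$ into Hilbert series, inserts the factor $t$ for the extra hair at the $4$-valent (left) $\Theta$-vertex in $\calC_1$, and obtains $\chi(t)$ from Lemma~\ref{l_euler} with $(-1)^{N-1}=(-1)^{N-m}=1$. Your explicit justification of the Hodge-degree shift via preservation of Hodge degree by $\partial$ and of the injectivity of $\partial_2$ (since $\Sym[x_1,x_2,x_3]$ is a domain) only makes explicit what the paper states geometrically.
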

Although the underlying spaces of $H_0$ and $H_1$ are the same, we add the factor $t$ in $h_1(t)$  because of the additional hair growing from the left vertex of the $\Theta$-graph. The formula for $\chi(t)$  was obtained using Lemma~\ref{l_euler}.


\subsection{When both $N$ and $m$ are even}
This is the second case where $m$ and $N$ have the same parity.  We consider the same space of polynomials $\Q[x_1,x_2,x_3]$ as in our previous case.  However, we obtain a negative sign  when we transpose two edges, see Lemma~\ref{l_sign}~(3). Thus, we can no longer use symmetric polynomials to describe our space; instead, we turn to antisymmetric polynomials.  An antisymmetric polynomial is defined as a polynomial on $n$ variables such that any transposition of variables results in a negative sign.
The two-loop hairy graph-complex has now the following form:
$$
0 \xrightarrow{\partial_3} \ASym^{\odd}[x_1,x_2,x_3] \xrightarrow{\partial_2} \ASym[x_1,x_2,x_3] \xrightarrow{\partial_1} \ASym^{\even}[x_1,x_2,x_3] \xrightarrow{\partial_0} 0
$$

\begin{lemma}\label{l_diff_m_even_n_even}
In the case both $m$ and $N$ are even the differentials in the two-loop hairy graph-complex are defined as follows
\begin{align*}
\partial_2f&=2e_1f,\\
\partial_1f&=\frac 12(e_1f+A(e_1f))=[e_1f]_{\even}.
\end{align*}
\end{lemma}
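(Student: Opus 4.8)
The plan is to follow the proof of Lemma~\ref{l_diff_m_odd_n_odd} essentially verbatim, the only genuine change being the bookkeeping of signs. First I would observe that the combinatorics of the differential does not depend on the parities of $m$ and $N$: a graph in $\calC_2$ has exactly two $4$-valent internal vertices, one at each trivalent branch point of the $\Theta$-frame, and each admits a unique expansion that keeps the graph hairy --- it splits off a new hair onto one of the three edges of $\Theta$. On the polynomial side, attaching a new hair to an arbitrary one of the three edges is exactly multiplication by $e_1=x_1+x_2+x_3$, and the two branch points give the same contribution by the vertical symmetry, which produces the coefficient $2$ in $\partial_2$. For $\partial_1$ one expands the single $4$-valent vertex of a graph in $\calC_1$ and then projects the result onto $\ASym^{\even}$; since the automorphism $A$ commutes with permutations of the $x_i$ it preserves $\ASym$, so this projection is $g\mapsto\frac12(g+A(g))=[g]_{\even}$, which turns $e_1f$ into $[e_1f]_{\even}$.

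Next I would compute the overall sign, which is where the even--even case differs from the odd--odd one. I would use the sign rule for the differential from Section~\ref{s_graph} --- prepend the new internal vertex (degree $-N$) and the new edge (degree $N-1$) as the first two elements of the orientation set and orient the new edge from the old to the new vertex --- together with the ordering of the orientation sets of $x_1^{k_1}x_2^{k_2}x_3^{k_3}$ fixed in Section~\ref{s_2loops}. Carrying the Koszul signs of the transpositions that move the new edge and new vertex past the pre-existing orientation data, and of the edge reorientations, one checks that switching $m$ and $N$ from odd to even flips the global sign from $-$ to $+$, yielding $\partial_2f=2e_1f$ and $\partial_1f=[e_1f]_{\even}$.

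The hard part is nothing conceptual: it is just faithfully propagating the Koszul signs through the chosen orientation-set convention without error. As already noted in the proof of Lemma~\ref{l_diff_m_odd_n_odd}, neither the signs nor the coefficient $2$ matter for the homology --- rescaling a differential by a nonzero constant changes neither its kernel nor its image --- so one may equally well simply fix the orientation conventions so that the displayed formulas hold. The uniform sign computation covering all four parity cases is the content of Remark~\ref{r_diff_general}, to which I would refer for the details.
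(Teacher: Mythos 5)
Your proposal is correct and follows essentially the same route as the paper: the combinatorial content (two $4$-valent vertices giving the factor $2$, expansion of a vertex acting as multiplication by $e_1$, and the even projection for $\partial_1$) is exactly the argument given for Lemma~\ref{l_diff_m_odd_n_odd}, and the paper itself disposes of the even--even case by simply citing Remark~\ref{r_diff_general}, declaring---as you do---that the precise signs and the coefficient $2$ are irrelevant for the homology. The only nitpick is the phrase ``a unique expansion'': each $4$-valent vertex admits three expansions, one for each edge of $\Theta$, but your next sentence makes clear you mean precisely this sum, so the argument stands.
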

See Remark~\ref{r_diff_general} for a general formula.
One can clearly see that $\partial^2=0$.

The following result is well known.

\begin{theorem}
The space $\ASym[x_1,x_2,x_3]$ of antisymmetric polynomials is a free module over the algebra $\Sym[x_1,x_2,x_3]$ of symmetric polynomials generated by the only element
$$
\Delta=(x_1-x_2)(x_1-x_3)(x_2-x_3).
$$
\end{theorem}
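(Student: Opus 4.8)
The plan is to verify the two inclusions $\Delta\cdot\Sym[x_1,x_2,x_3]\subseteq\ASym[x_1,x_2,x_3]$ and $\ASym[x_1,x_2,x_3]\subseteq\Delta\cdot\Sym[x_1,x_2,x_3]$, and then to deduce freeness from the fact that $\Q[x_1,x_2,x_3]$ is an integral domain. For the first inclusion, one checks directly that $\Delta$ is antisymmetric: under the transposition interchanging $x_1$ and $x_2$ the factor $(x_1-x_2)$ picks up a sign while the factors $(x_1-x_3)$ and $(x_2-x_3)$ get swapped, so $\Delta\mapsto-\Delta$, and similarly for the other two transpositions. Hence if $f$ is symmetric then $\sigma(\Delta f)=\sigma(\Delta)\,\sigma(f)=\operatorname{sgn}(\sigma)\,\Delta f$ for every $\sigma\in S_3$, so $\Delta f\in\ASym[x_1,x_2,x_3]$.

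For the reverse inclusion, let $g\in\ASym[x_1,x_2,x_3]$. Applying the transposition that swaps $x_i$ and $x_j$ changes the sign of $g$, so $g$ restricted to the hyperplane $x_i=x_j$ equals its own negative and therefore vanishes there; consequently the irreducible linear form $x_i-x_j$ divides $g$ in the UFD $\Q[x_1,x_2,x_3]$. Since the three linear forms $x_1-x_2$, $x_1-x_3$, $x_2-x_3$ are pairwise non-associate irreducibles, their product $\Delta$ divides $g$, say $g=\Delta h$ with $h\in\Q[x_1,x_2,x_3]$. Then for every $\sigma\in S_3$ one has $\operatorname{sgn}(\sigma)\,\Delta h=\sigma(g)=\sigma(\Delta)\,\sigma(h)=\operatorname{sgn}(\sigma)\,\Delta\,\sigma(h)$, and cancelling the nonzero factor $\operatorname{sgn}(\sigma)\,\Delta$ (legitimate in the domain $\Q[x_1,x_2,x_3]$) gives $\sigma(h)=h$, i.e.\ $h\in\Sym[x_1,x_2,x_3]$. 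Thus $g\in\Delta\cdot\Sym[x_1,x_2,x_3]$.

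Combining the two inclusions, the $\Sym[x_1,x_2,x_3]$-linear map $\Sym[x_1,x_2,x_3]\to\ASym[x_1,x_2,x_3]$ given by $f\mapsto\Delta f$ is surjective; it is injective because $\Delta\neq0$ and $\Q[x_1,x_2,x_3]$ has no zero divisors. Hence $\ASym[x_1,x_2,x_3]$ is free of rank one over $\Sym[x_1,x_2,x_3]$ with generator $\Delta$. The only step carrying any content is the passage from divisibility of $g$ by each individual difference $x_i-x_j$ to divisibility by the product $\Delta$, which rests on unique factorization together with the pairwise coprimality of these three linear forms; everything else is a formal bookkeeping of signs under the $S_3$-action.
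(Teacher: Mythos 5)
Your proof is correct and complete: the vanishing-on-hyperplanes argument, the UFD/coprimality step to get divisibility by $\Delta$, the cancellation showing the quotient is symmetric, and the integral-domain argument for freeness are all valid in characteristic zero. The paper states this result without proof as a well-known classical fact, and your argument is precisely the standard one it implicitly relies on, so there is nothing to compare or correct.
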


%
%
%
%

The terms $\calC_i$ of our complex are generated by the bases:
\begin{align*}
\calC_2&=\langle \Delta e_1^\alpha e_2^\beta e_3^\gamma|\, \alpha+\gamma \equiv 0 \,\mathrm{mod}\, 2 \rangle;\\
\calC_1&=\langle \Delta e_1^\alpha e_2^\beta e_3^\gamma \rangle;\\
\calC_0&=\langle \Delta e_1^\alpha e_2^\beta e_3^\gamma|\, \alpha+\gamma \equiv 1 \,\mathrm{mod}\, 2 \rangle.
\end{align*}

We then compute $H_0$ and $H_1$:
\begin{align*}
H_0 &=\frac{\ker(\partial_0)}{\im (\partial_1)}\\
&=\frac{\langle \Delta e_1^\alpha e_2^\beta e_3^\gamma|\, \alpha+\gamma \equiv 1 \,\mathrm{mod}\, 2 \rangle}{\langle \Delta e_1^{\alpha +1}e_2^\beta e_3^\gamma|\, \alpha+\gamma \equiv 0 \,\mathrm{mod}\, 2 \rangle}\\
&=\langle \Delta e_2^\beta e_3^\gamma|\, \gamma \equiv 1 \,\mathrm{mod}\, 2 \rangle
\end{align*}



Following a similar process for $H_1$:
\begin{align*}
H_1 & =\frac{\ker(\partial_1)}{\im(\partial_2)}\\
&=\frac{\langle \Delta e_1^\alpha e_2^\beta e_3^\gamma|\, \alpha+\gamma \equiv 1 \,\mathrm{mod}\,2 \rangle}{\langle \Delta e_1^{\alpha+1}e_2^\beta e_3^\gamma|\, \alpha+\gamma \equiv 0\,\mathrm{mod}\,2\rangle}\\
&=\langle \Delta e_2^\beta e_3^\gamma|\, \gamma\equiv 1 \,\mathrm{mod}\, 2\rangle
\end{align*}

Thus we immediately obtain the following
\begin{theorem}\label{t_homology_m_even_n_even}
In case both $m$ and $N$ are even, the generating functions of the  dimensions of the two-loop homology $H_0$ and $H_1$, and of the Euler characteristics,  are
\begin{align*}
h_0(t) &= \frac{t^6}{(1 - t^2)(1 - t^6)},\\
h_1(t) &= \frac{t^7}{(1 - t^2)(1 - t^6)},\\
\chi (t) &= -\frac{t^6}{(1 + t)(1 - t^6)}.
\end{align*}
\end{theorem}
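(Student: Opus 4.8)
The plan is to read off both generating functions directly from the bases of $H_0$ and $H_1$ computed just above, once the polynomial degree has been converted into the Hodge degree (the number of hairs). Recall that a graph $x_1^{k_1}x_2^{k_2}x_3^{k_3}\in\calC_0$ has exactly $k_1+k_2+k_3$ hairs, so for a defect-$0$ graph the Hodge degree equals the degree of the representing polynomial; a graph in $\calC_1$ carries one extra hair at its four-valent internal vertex, so there the Hodge degree is one more than the polynomial degree.

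First I would note that the monomial $\Delta e_2^\beta e_3^\gamma$ has degree $3+2\beta+3\gamma$, the $3$ coming from $\Delta$. Summing over the basis $\{\Delta e_2^\beta e_3^\gamma:\ \beta\geq 0,\ \gamma\equiv 1\ \mathrm{mod}\ 2\}$ of $H_0$ (all realized in $\calC_0$) gives
$$
h_0(t)=\sum_{\beta\geq 0}\ \sum_{\gamma\ \mathrm{odd}}t^{3+2\beta+3\gamma}=\frac{1}{1-t^2}\cdot\frac{t^6}{1-t^6}=\frac{t^6}{(1-t^2)(1-t^6)}.
$$
Since $H_1$ has the same basis but its generators live in $\calC_1$, each contributes an extra factor $t$, whence $h_1(t)=t\,h_0(t)=\dfrac{t^7}{(1-t^2)(1-t^6)}$.

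For the Euler characteristic I would apply Lemma~\ref{l_euler}. Its hypothesis, injectivity of $\partial_2$, is immediate: by Lemma~\ref{l_diff_m_even_n_even} we have $\partial_2 f=2e_1 f$, and multiplication by the nonzero polynomial $e_1$ is injective on $\ASym[x_1,x_2,x_3]$, a submodule of the domain $\Q[x_1,x_2,x_3]$ (this also shows the homology of~\eqref{eq:hairy_2loop} vanishes in defect $2$). As $m$ and $N$ are both even, $(-1)^{N-1}=-1$ and $(-1)^{N-m}=1$, so Lemma~\ref{l_euler} yields $\chi(t)=-(h_0(t)-h_1(t))=-(1-t)\,h_0(t)$; using $1-t^2=(1-t)(1+t)$ this becomes $-\dfrac{t^6}{(1+t)(1-t^6)}$.

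I do not expect a genuine obstacle here: all the substantive work — identifying $\calC_0,\calC_1,\calC_2$ with spaces of antisymmetric polynomials, describing the differentials, and computing $H_0$ and $H_1$ — precedes the statement. The one place to be careful is the hair bookkeeping: keeping track of the degree-$3$ contribution of $\Delta$ and of the extra hair carried by a defect-$1$ graph, and correctly inserting the two parity substitutions $(-1)^{N-1}$ and $(-1)^{N-m}$ into Lemma~\ref{l_euler}.
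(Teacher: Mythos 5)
Your proposal is correct and follows essentially the same route as the paper: both read $h_0$ off the basis $\Delta e_2^\beta e_3^\gamma$ ($\gamma$ odd), i.e.\ a free $\Q[e_2,e_3^2]$-module on the degree-$6$ generator $\Delta e_3$, insert the extra factor $t$ in $h_1$ for the additional hair at the defect-$1$ vertex, and obtain $\chi$ from Lemma~\ref{l_euler} with $(-1)^{N-1}=-1$, $(-1)^{N-m}=1$. Your explicit remark that $\partial_2 f=2e_1f$ is injective (multiplication by $e_1$ in the domain $\Q[x_1,x_2,x_3]$) merely spells out what the paper leaves implicit when invoking Lemma~\ref{l_euler}.
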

\begin{proof}
Notice that both $H_0$ and $H_1$ are free modules over $\Q[e_2,e_3^2]$ generated by $\Delta e_3$. The degree of $\Delta e_3$ is 6 and the graded dimension of   $\Q[e_2,e_3^2]$ is $\frac 1{(1-t^2)(1-t^6)}$. The result follows.
Again to compute $\chi(t)$ we used Lemma~\ref{l_euler}.
\end{proof}

%
%


\subsection{When $N$ is odd and $m$ is even}
When $m$ and $N$ are of different parities, Lemma~\ref{l_sign}~(3) tells us that the generators $x_i$ should be odd to make the action of $S_3$ well defined. To emphasize that they are odd we will denote them by $\xi_i$, $i=1\ldots 3$ instead. Let $\Q\langle \xi_1,\xi_2,\xi_3\rangle$ be a super graded algebra generated by odd $\xi_i$'s, $i=1\ldots 3$, that anti-commute with each other, but not with themselves. To be precise we require $\xi_i\xi_j=-\xi_j\xi_i$ when $i\neq j$, but not  $\xi_i^2=0$. Notice that this algebra is neither commutative nor supercommutative. The group $S_3$ acts on it by renumbering  the variables. We will denote by $\Sym\langle\xi_1,\xi_2,\xi_3\rangle$ its $S_3$-invariant part. Notice however that a symmetrization of a monomial $\xi_1^{k_1}\xi_2^{k_2}\xi_3^{k_3}$ can be zero if for example $k_1=k_2$ and they are odd. Again this algebra is neither commutative nor supercommutative. It is not hard to find its set of generators and relations, but we omit it here since it unfortunately does not help with our homology computations. We will still denote by $e_1:=\xi_1+\xi_2+\xi_3$.

To describe the symmetry with respect to the vertical line and applying the signs from Lemma~\ref{l_sign}~(1-2), we define a linear map
$
A\colon\Q\langle \xi_1,\xi_2,\xi_3\rangle\to\Q\langle \xi_1,\xi_2,\xi_3\rangle
$
by
$$
A(\xi_1^{k_1}\xi_2^{k_2}\xi_3^{k_3})= (-1)^{\otherleavescount}\xi_1^{k_1}\xi_2^{k_2}\xi_3^{k_3}.
$$
Given a super graded algebra $\calA$ one can define its opposite algebra $\calA^{op}$ by taking the same space and defining the new product $\star$ as follows
$$
a\star b:=(-1)^{|a||b|}ba.
$$
A homomorphism $\calA\to \calB^{op}$ is called an antihomomorphism, and an isomorphism $\calA\to\calA^{op}$ is called an antiautomorphism. The following is easy to check.

\begin{lemma}\label{l_anti_aut}
The linear map $A\colon\Q\langle \xi_1,\xi_2,\xi_3\rangle\to\Q\langle \xi_1,\xi_2,\xi_3\rangle$ can also be described as the antiautomorphism of $\Q\langle \xi_1,\xi_2,\xi_3\rangle$ that sends the generators $\xi_i$ to $-\xi_i$, $i=1\ldots 3$.
\end{lemma}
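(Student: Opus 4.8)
The plan is to verify the two claims in Lemma~\ref{l_anti_aut} directly from the definition of $A$ and the multiplication rules of $\Q\langle\xi_1,\xi_2,\xi_3\rangle$. First I would set up notation: write $B$ for the linear map that is claimed to coincide with $A$, namely the unique antiautomorphism of $\Q\langle\xi_1,\xi_2,\xi_3\rangle$ with $B(\xi_i)=-\xi_i$. Since the $\xi_i$ have odd degree and $B$ is an antihomomorphism into the opposite algebra, the product rule $B(ab)=(-1)^{|a||b|}B(b)B(a)$ must be used repeatedly; one must check first that such a $B$ exists, i.e. that the defining relations $\xi_i\xi_j=-\xi_j\xi_i$ for $i\neq j$ are preserved. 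Applying $B$ to $\xi_i\xi_j$ gives $(-1)^{1\cdot 1}B(\xi_j)B(\xi_i)=(-1)(-\xi_j)(-\xi_i)=-\xi_j\xi_i$, and applying $B$ to $\xi_j\xi_i$ similarly gives $-\xi_i\xi_j$; since $\xi_i\xi_j=-\xi_j\xi_i$, both sides are consistent, so $B$ is well defined on the whole algebra (the relations $\xi_i^2=0$ are not imposed, so no further compatibility is needed).

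Next I would compute $B$ on a general monomial $w=\xi_1^{k_1}\xi_2^{k_2}\xi_3^{k_3}$ and compare with the stated formula for $A$. The key step is a bookkeeping of Koszul signs: reversing the order of a product of $\ell$ odd generators $y_1y_2\cdots y_\ell$ under an antiautomorphism produces the sign $(-1)^{\binom{\ell}{2}}$ times the reversed product $y_\ell\cdots y_1$, because each pair gets transposed exactly once. Here $\ell=k_1+k_2+k_3=\leavescount$, so reversing $\xi_1^{k_1}\xi_2^{k_2}\xi_3^{k_3}$ contributes $(-1)^{\binom{k_1+k_2+k_3}{2}}$, and it yields $\xi_3^{k_3}\xi_2^{k_2}\xi_1^{k_1}$ after accounting for the fact that equal generators within a block contribute no extra sign among themselves. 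Then applying $B(\xi_i)=-\xi_i$ to all $\ell$ factors contributes $(-1)^{k_1+k_2+k_3}$. Finally one must move the blocks back into the order $\xi_1^{k_1}\xi_2^{k_2}\xi_3^{k_3}$ using the anticommutation relations across distinct indices: swapping the block $\xi_3^{k_3}$ past $\xi_2^{k_2}$ and $\xi_1^{k_1}$, and $\xi_2^{k_2}$ past $\xi_1^{k_1}$, costs $(-1)^{k_1k_2+k_1k_3+k_2k_3}$. So $B(w)=(-1)^{\binom{k_1+k_2+k_3}{2}+(k_1+k_2+k_3)+(k_1k_2+k_1k_3+k_2k_3)}\,\xi_1^{k_1}\xi_2^{k_2}\xi_3^{k_3}$.

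To finish, I would check that the exponent above equals $\otherleavescount \pmod 2$, i.e. that
$$
\binom{k_1+k_2+k_3}{2}+\sum_{i=1}^{3}k_i+\sum_{i<j}k_ik_j \;\equiv\; \sum_{i=1}^{3}\frac{k_i(k_i+1)}{2}\pmod 2.
$$
This is an elementary identity: expand $\binom{k_1+k_2+k_3}{2}=\frac{(\sum k_i)^2-\sum k_i}{2}=\sum_i\binom{k_i}{2}+\sum_{i<j}k_ik_j$, so the left side becomes $\sum_i\binom{k_i}{2}+\sum_i k_i+2\sum_{i<j}k_ik_j\equiv\sum_i\left(\binom{k_i}{2}+k_i\right)=\sum_i\binom{k_i+1}{2}=\otherleavescount\pmod 2$, which is exactly the exponent in the definition of $A$. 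Hence $B(w)=A(w)$ on every monomial, and by linearity $A=B$, proving that $A$ is the stated antiautomorphism.

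The main obstacle is purely combinatorial: keeping the various Koszul and anticommutation signs straight, in particular not double-counting the sign contribution from reordering generators that already sit in separate index-blocks versus the sign from reversing the entire word. Organizing the computation as ``reverse the whole word (sign $(-1)^{\binom{\ell}{2}}$), negate each generator (sign $(-1)^{\ell}$), then re-sort the blocks (sign $(-1)^{\sum_{i<j}k_ik_j}$)'' avoids this pitfall, after which the congruence identity above is routine.
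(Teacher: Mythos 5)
Your verification is correct and complete: the Koszul-sign bookkeeping (reversal sign $(-1)^{\binom{\ell}{2}}$, negation sign $(-1)^{\ell}$, re-sorting sign $(-1)^{\sum_{i<j}k_ik_j}$) and the final congruence reducing the total exponent to $\sum_{i=1}^{3}\frac{k_i(k_i+1)}{2}\pmod 2$ all check out, as does the well-definedness of the antiautomorphism on the relations $\xi_i\xi_j=-\xi_j\xi_i$. The paper leaves this as ``easy to check,'' and your argument is exactly the direct monomial-by-monomial verification that is intended.
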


Since $A^2=\mathrm{id}$ this antiautomorphism is an {\it anti-involution}.


\begin{definition}\label{d_*parity}
We say that an element   $f\in\Q\langle \xi_1,\xi_2,\xi_3\rangle$ is   $*$even ($*$odd, respectively) if
$A(f)=f$ ($A(f)=-f$, respectively).
\end{definition}
With this, we may describe our graph complex for this case:
$$
0 \xrightarrow {\partial _3} \Sym^{*\even}\langle\xi_1, \xi_2, \xi_3\rangle \xrightarrow{\partial_2} \Sym\langle \xi_1, \xi_2, \xi_3\rangle \xrightarrow{\partial_1} \Sym^{*\even}_{>0}\langle \xi_1, \xi_2, \xi_3\rangle\xrightarrow{\partial_0} 0
$$
The superscript $*\even$ says that we take only subspace of $*$even polynomials; the subscript $>0$ means that we take polynomials without constant term.

\begin{lemma}\label{l_diff_m_even_n_odd}
For the case $m$ even and $N$ odd, the differentials in the two-loop hairy complex are described as follows:
\begin{align*}
\partial_2f&=-(-1)^{|f|}2fe_1,\\
\partial_1f&=\frac 12(e_1f+A(e_1f))=\frac 12(e_1f-(-1)^{|f|}A(f)e_1)=[e_1f]_{*even}.
\end{align*}
\end{lemma}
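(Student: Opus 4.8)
The plan is to mimic the computation already carried out in Lemma~\ref{l_diff_m_odd_n_odd} and Lemma~\ref{l_diff_m_even_n_even}, tracking the super signs that arise because the generators $\xi_i$ are now odd. First I would recall from Section~\ref{s_graph} that $\partial_2$ on a graph in $\calC_2$ expands each of the two $4$-valent vertices of the $\Theta$-frame, that both expansions give the same hairy graph by the vertical symmetry (hence the factor $2$), and that the effect of adding a hair to a graph represented by a polynomial $f$ is, up to sign, multiplication by $e_1=\xi_1+\xi_2+\xi_3$. The only genuinely new point relative to the previous two cases is that the newly created hair carries an odd degree element of the orientation set (the new external vertex has degree $-m$, which is even, but the new edge and the reordering interact with the already-present odd leaves), so moving the new factor past $f$ produces the Koszul sign $(-1)^{|f|}$; combined with the sign rule for the differential from Section~\ref{s_graph} this gives $\partial_2 f=-(-1)^{|f|}2 f e_1$. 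I would present this exactly as in the footnote to Lemma~\ref{l_diff_m_odd_n_odd}: the precise value of the overall sign and the coefficient $2$ are irrelevant for the homology, so I would only verify that the formula is consistent, and point to Remark~\ref{r_diff_general} for the bookkeeping.

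For $\partial_1$, the differential lands in $\Sym^{*\even}_{>0}\langle\xi_1,\xi_2,\xi_3\rangle$, so after multiplying the input $f\in\Sym\langle\xi_1,\xi_2,\xi_3\rangle$ by $e_1$ one must project onto the $*$even part. By Definition~\ref{d_*parity} and the discussion preceding it, the projection onto $*$even elements is $g\mapsto\frac12(g+A(g))$, which gives the first equality $\partial_1 f=\frac12(e_1 f+A(e_1 f))=[e_1 f]_{*\even}$. For the middle equality I would invoke Lemma~\ref{l_anti_aut}: since $A$ is the antiautomorphism sending $\xi_i\mapsto-\xi_i$, we have $A(e_1 f)=A(f)\star A(e_1)$ in the opposite-algebra sense, i.e. $A(e_1 f)=(-1)^{|e_1 f|}A(e_1)\,A(f)$ computed with the ordinary product; since $e_1$ is odd and $A(e_1)=-e_1$ and $f$ is $*$even (so $A(f)=f$) when we restrict to the relevant summand — more carefully, for homogeneous $f$ one gets $A(e_1 f)=(-1)^{|f|+1}(-e_1)\,A(f)=(-1)^{|f|}e_1 A(f)$ if one is careless, so I would instead write it as $A(e_1 f)=-(-1)^{|f|}A(f)\,e_1$ by applying the anti-involution directly: $A$ reverses order, sends $e_1\mapsto-e_1$, and contributes the Koszul sign $(-1)^{|e_1||f|}=(-1)^{|f|}$ for the reversal, net $-(-1)^{|f|}A(f)e_1$. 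Substituting yields $\partial_1 f=\frac12\bigl(e_1 f-(-1)^{|f|}A(f)e_1\bigr)$, which is the stated second equality.

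The main obstacle is getting the Koszul signs in the anti-involution identity exactly right: one must be scrupulous that $A$ on $\Q\langle\xi_1,\xi_2,\xi_3\rangle$ both reverses the order of the $\xi$'s \emph{and} picks up the Koszul sign from that reversal \emph{and} applies $\xi_i\mapsto-\xi_i$, and that the formula $A(xy)=(-1)^{|x||y|}A(y)A(x)$ from the definition of antiautomorphism is applied with $x=e_1$ (odd) and $y=f$. Once the identity $A(e_1 f)=-(-1)^{|f|}A(f)e_1$ is established for homogeneous $f$, extending by linearity and collecting terms is immediate; since a general element of $\Sym\langle\xi_1,\xi_2,\xi_3\rangle$ is a sum of homogeneous pieces and both $\partial_1$ and the claimed formula are additive, this suffices. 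I would therefore structure the proof as: (i) describe the vertex expansions geometrically, reusing the text of Lemma~\ref{l_diff_m_odd_n_odd}; (ii) record the sign computation for $\partial_2$ with a footnote deferring details to Remark~\ref{r_diff_general}; (iii) prove the anti-involution identity for homogeneous $f$ using Lemma~\ref{l_anti_aut}; (iv) conclude the three expressions for $\partial_1$ by substitution and linearity, and note that $\partial_1\partial_2=0$ follows since $\partial_1$ is, up to the projection, multiplication by $e_1$ again and $[e_1\cdot e_1 f e_1]_{*\even}$ vanishes by the same sign cancellation.
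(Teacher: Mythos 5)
Your proposal is correct and takes essentially the same route as the paper, which states this lemma without a separate proof: the geometric explanation of the factor $2$ and of the multiplication by $e_1$ is the one already given for Lemma~\ref{l_diff_m_odd_n_odd}, the sign bookkeeping is deferred to Remark~\ref{r_diff_general} exactly as you do, and the middle equality is just the antiautomorphism property of Lemma~\ref{l_anti_aut}, $A(e_1f)=(-1)^{|f|}A(f)A(e_1)=-(-1)^{|f|}A(f)e_1$, which you verify correctly (together with $[g]_{*\even}=\frac12(g+A(g))$ from $A^2=\mathrm{id}$). The only slip is in your closing aside, which is not part of the lemma: $\partial_1\partial_2 f=\pm2\,[e_1fe_1]_{*\even}$ (no extra $e_1$), and it vanishes because $e_1fe_1$ is $*$odd by Lemma~\ref{l_square}, not by any cancellation of squares, since $\xi_i^2\neq0$ in $\Q\langle\xi_1,\xi_2,\xi_3\rangle$.
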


\begin{remark}\label{r_diff_general}
In all the four cases the differentials in the two loop hairy graph-complex can be described as follows
\begin{align*}
\partial_2f&=(-1)^{N+|f|}2fe_1;\\
\partial_1f&=(-1)^{mN}\frac 12(e_1f+A(e_1f)).
\end{align*}
In the above $|f|$ refers to the supergrading of $f$. In the case $m$ and $N$ of the same parity, $|f|$ is always even.
\end{remark}

The signs $(-1)^N$ and $(-1)^{mN}$ in the aforementioned formulas are not important for the homology computations for which reason we don't show  how they were calculated.

%
%

The following lemma proves that $\partial^2=0$.

\begin{lemma}\label{l_square}
Let $\omega=\alpha\xi_1+\beta\xi_2+\gamma\xi_3$ be a linear element of $\Q\langle \xi_1,\xi_2,\xi_3\rangle$. If $f\in\Q\langle \xi_1,\xi_2,\xi_3\rangle$ is $*$even, respectively $*$odd, then $\omega f\omega$ is $*$odd, respectively $*$even.
\end{lemma}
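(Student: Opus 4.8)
The plan is to verify the statement by a direct computation using the anti-involution $A$ from Lemma~\ref{l_anti_aut} together with the defining property $a\star b=(-1)^{|a||b|}ba$ of the opposite algebra. The key point is that $\omega=\alpha\xi_1+\beta\xi_2+\gamma\xi_3$ is an odd element, so $|\omega|=1$; I will track how $A$ interacts with left and right multiplication by such an $\omega$.

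First I would record the basic compatibility of $A$ with multiplication. Since $A$ is an antiautomorphism sending each $\xi_i$ to $-\xi_i$, for any homogeneous $g,h$ one has $A(gh)=(-1)^{|g||h|}A(h)A(g)$, and in particular $A(\omega)=-\omega$. Applying this twice, for $*$even or $*$odd $f$ (so $A(f)=\varepsilon f$ with $\varepsilon=\pm 1$), I compute
\begin{align*}
A(\omega f\omega)&=(-1)^{|\omega f||\omega|}A(\omega)\,A(\omega f)\\
&=(-1)^{(|f|+1)}(-\omega)\cdot(-1)^{|\omega||f|}A(f)A(\omega)\\
&=(-1)^{|f|+1}(-1)^{|f|}\,\varepsilon\,(-\omega)f(-\omega)\\
&=-\varepsilon\,\omega f\omega.
\end{align*}
Thus $A(\omega f\omega)=-A(f)$-eigenvalue times $\omega f\omega$, i.e.\ $\omega f\omega$ is $*$odd when $f$ is $*$even and $*$even when $f$ is $*$odd, which is exactly the claim.

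The only thing requiring care — and the step I expect to be the mild obstacle — is getting the Koszul signs right in the second line, namely correctly expanding $A(\omega f\omega)$ in two stages: grouping it as $A\bigl((\omega f)\cdot\omega\bigr)$ picks up $(-1)^{|\omega f|\cdot|\omega|}=(-1)^{|f|+1}$ and reverses the two factors to $A(\omega)A(\omega f)$, and then $A(\omega f)=(-1)^{|\omega||f|}A(f)A(\omega)=(-1)^{|f|}A(f)A(\omega)$. Multiplying the two sign contributions gives $(-1)^{|f|+1}(-1)^{|f|}=-1$, and the factor $(-1)^2=1$ from the two copies of $A(\omega)=-\omega$ is absorbed. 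One should note that this computation does not need $f$ to be in the $S_3$-invariant part, nor does it use any relation beyond anticommutativity of distinct $\xi_i$'s, since it only uses that $A$ is an algebra anti-automorphism; hence the lemma holds for arbitrary homogeneous $\omega$ of odd degree and arbitrary $f$ of definite $*$parity, and by linearity the restriction to homogeneous $f$ is harmless.
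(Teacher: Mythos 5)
Your computation is correct and is essentially the paper's own proof: both arguments use that $A$ is an anti-involution with $A(\omega)=-\omega$ and track the Koszul signs to arrive at $A(\omega f\omega)=-\omega A(f)\omega$, the only difference being that you group the triple product as $(\omega f)\omega$ rather than $\omega(f\omega)$. (Your closing aside overreaches slightly: for a general homogeneous $\omega$ of odd degree the argument additionally needs $\omega$ to be an $A$-eigenvector, since otherwise $-A(\omega)A(f)A(\omega)$ need not be a multiple of $\omega f\omega$; this does not affect the lemma as stated, where $A(\omega)=-\omega$.)
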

\begin{proof}
Using the fact that $A$ is an antiautomorphism and that $A(\omega)=-\omega$, we get
$$
A(\omega f\omega)= (-1)^{|f|+1}A(f\omega)A(\omega)=(-1)^{|f|+1}(-1)^{|f|}A(\omega)A(f)A(\omega)=
-\omega A(f)\omega.
$$
The result follows.
\end{proof}

Unfortunately the property of being $*$even or $*$odd is not preserved by a product. Thus in particular $\Sym^{*\even}\langle\xi_1,\xi_2,\xi_3\rangle$ is not an algebra. This makes our computations  more difficult for the case when $m+N$ is odd. In order to compute the homology we will introduce a good basis in our spaces $\calC_i$, $i=0\ldots 2$. First denote by $(k_1,k_2,k_3)$ the symmetrization of $\xi_1^{k_1}\xi_2^{k_2}\xi_3^{k_3}$ that has coefficient~1 in front of $\xi_1^{k_1}\xi_2^{k_2}\xi_3^{k_3}$. For example,
$$
(1,0,0)=\xi_1+\xi_2+\xi_3;\qquad (2,2,2)=\xi_1^2\xi_2^2\xi_3^2.
$$
The elements $(k_1,k_2,k_3)$ satisfying $k_1\geq k_2\geq k_3$ and  if $k_i=k_{i+1}$, then $k_i$ is even, $i=1,2$, form a basis of $\Sym\langle \xi_1, \xi_2, \xi_3\rangle$ that will be denoted by $\calB$. This set comes with a natural order -- first one compares the degree $k_1+k_2+k_3$, then within the same degree one orders the elements lexicographically. Since the above elements form a basis, any symmetric polynomial $f$ is a unique linear combination of them. The maximal element of $\calB$ that has a non-zero coefficient in the above sum, will be called the leading term of $f$. The subset of $*$even, respectively $*$odd, elements in $\calB$ will be denoted by $\calB^{*\even}$, respectively $\calB^{*\odd}$. For a basis of $\calC_2$ we will simply choose the set $\calB^{*\even}$. For a basis of $\calC_1$ we will take a union of the following sets
\begin{gather*}
\{fe_1|\, f\in\calB\},\\
\{(k_2 + 1, k_2, k_3) |\, k_2 \equiv 1 \,\mathrm{mod}\, 2,\, k_2>k_3\},\\
\{(k_2, k_2, k_3) |\, k_2 \equiv 0 \,\mathrm{mod}\,\ 2, k_2\geq k_3\}.
\end{gather*}
The above union of sets forms a basis of $\calC_1=\Sym\langle \xi_1, \xi_2, \xi_3\rangle$ since every element of $\calB$ appears exactly once as a leading term of one of the elements in the  union.

Since $\partial _2$ up to a sign is a multiplication by $2e_1$ on the right, we have
$$
\im(\partial _2) = \langle f e_1 | f \in \calB^{*\even}\rangle.
$$
Thus $\im(\partial _2)$ is a space spanned by a subset of basis elements.


Now, we compute $\partial _1$ on the basis elements. Applying Lemmas~\ref{l_diff_m_even_n_odd},~\ref{l_square},
\begin{align*}
\partial_1(fe_1)=0,\text{ for }f\in\calB^{*\even};\\
\partial_1(fe_1)=e_1fe_1,\text{ for }f\in\calB^{*\odd}.\\
\end{align*}

Notice that the second line produces elements whose leading terms form the subset of $\calB^{*\even}$ of elements $(k_1,k_2,k_3)$ such that $k_1>k_2+2$, or $k_1=k_2+2$ and $k_2$ is even.

We continue our computation of $\partial_1$ on the basis elements:
\begin{center}
\begin{tabular}{l|l|l}
$X$&$\partial_1 X$&\\
\hline
$(k_2 + 1, k_2, k_3)$ & $2(k_2 + 1, k_2 + 1, k_3)$& $ k_2\text{ odd},\, k_3 \equiv 0\, \mathrm{mod}\, 4,\,  k_2>k_3$\\
$(k_2 + 1, k_2, k_3)$ & $(k_2 + 2, k_2, k_3)$ & $k_2\text{ odd},\, k_3 \equiv 1 \,\mathrm{mod}\, 4,\,  k_2>k_3$\\
$(k_2 + 1, k_2, k_3)$ &  $(k_2 + 2, k_2, k_3)  - (k_2 + 1, k_2, k_3 + 1)$ & $k_2\text{ odd},\, k_3 \equiv 2 \,\mathrm{mod}\, 4,\, k_2>k_3$\\
$(k_2 + 1, k_2, k_3)$ & $2(k_2 + 1, k_2 + 1, k_3)  -(k_2 + 1, k_2, k_3 + 1)$ & $k_2\text{ odd},\, k_3 \equiv 3 \,\mathrm{mod}\, 4,\, k_2>k_3$\\
$(k_2, k_2, k_3)$ & $0$ & $k_2\text{ even},\, k_3 \equiv 0 \,\mathrm{mod}\, 4,\, k_2\geq k_3$\\
$(k_2, k_2, k_3)$ & $(k_2 + 1, k_2, k_3)$ & $k_2\text{ even},\, k_3 \equiv 1 \,\mathrm{mod}\, 4 ,\, k_2>k_3$\\
$(k_2, k_2, k_3)$ & $(k_2 + 1, k_2, k_3)$ & $k_2\text{ even},\, k_3 \equiv 2 \,\mathrm{mod}\, 4  ,\, k_2\geq k_3$\\
$(k_2, k_2, k_3)$ & $(k_2, k_2, k_3 + 1)$ & $k_2\text{ even},\, k_3 \equiv 3 \,\mathrm{mod}\, 4,\, k_2>k_3+1$\\
$(k_3+1, k_3+1, k_3)$ & $3(k_3+1, k_3+1, k_3 + 1)$ & $k_3 \equiv 3 \,\mathrm{mod}\, 4$\\
\end{tabular}
\end{center}
It is easy to see that the images from the first line are twice the images of the next to the last one. If one excludes the elements $\{(k_2 + 1, k_2, k_3)|, k_2 \equiv 1\, \mathrm{mod}\, 2,\, k_3 \equiv 0 \,\mathrm{mod}\, 4,\, k_2>k_3\}$  from the basis of $\calC_1$ (i.e. elements that appear on the first line above), then all the non-zero images of such set have different leading terms. Thus these images are linearly independent. To complete the obtained set to a basis of $\calC_0$ we add elements
$$
\{(k_2 + 1, k_2, k_3)|\, k_2 \equiv 1 \,\mathrm{mod}\, 2,\, k_3 \equiv 0,3 \,\mathrm{mod}\, 4,\, k_2>k_3\}
\cup \{(k_3 + 1, k_3 + 1, k_3)|\, k_3 \equiv 3 \,\mathrm{mod}\, 4\}.
$$
This immediately implies that the above set is a basis of cycles in $H_0$. In other words,
\begin{multline}
H_0= \frac{\ker(\partial _0)}{\im(\partial _1)}=\\
\langle(k_2 + 1, k_2, k_3)|\, k_2 \equiv 1 \,\mathrm{mod}\, 2,\, k_3 \equiv 0,3 \,\mathrm{mod}\, 4,\, k_2>k_3\rangle \oplus \langle(k_3 + 1, k_3 + 1, k_3)|\, k_3 \equiv 3 \,\mathrm{mod}\, 4\rangle.
\label{eq_H0_m_even_n_odd}
\end{multline}
We will now compute $H_1$. It is also easy to see from above that
\begin{multline*}
\ker(\partial_1)=\im(\partial_2)\oplus \langle (k_2, k_2, k_3) |\, k_2 \equiv 0 \,\mathrm{mod}\,\ 2,\, k_3 \equiv 0 \,\mathrm{mod}\, 4,\, k_2\geq k_3\rangle \oplus\\
\langle 2(k_2+1,k_2+1,k_3)-(k_2+1,k_2,k_3+1)|\, k_2\equiv 1\,\mathrm{mod}\, 2,\, k_3\equiv 3\,\mathrm{mod}\, 4,\, k_2>k_3\rangle.
\end{multline*}
Thus we get
\begin{multline}
H_1=\langle (k_2, k_2, k_3) |\, k_2 \equiv 0 \,\mathrm{mod}\, 2,\, k_3 \equiv 0 \,\mathrm{mod}\, 4,\, k_2\geq k_3\rangle \oplus\\
\langle (k_2+1,k_2+1,k_3)-(k_2+1,k_2,k_3+1)|\, k_2\equiv 1\,\mathrm{mod}\, 2,\, k_3\equiv 3\,\mathrm{mod}\, 4,\, k_2>k_3\rangle.
\label{eq_H1_m_even_n_odd}
\end{multline}


\begin{theorem}\label{t_homology_m_even_n_odd}
In case  $m$ is even  and $N$ is odd, the generating functions of the  dimensions of the two-loop graph-homology $H_0$ and $H_1$, and of the Euler characteristics,  are
\begin{align*}
h_0(t) &= \frac{t^3 + t^{11} + t^{14} - t^{15}}{(1-t^4)(1-t^{12})},\\
h_1(t) &= \frac{t + t^{16}}{(1-t^4)(1-t^{12})},\\
\chi (t) &= \frac{t-t^{11}-t^{13}+t^{14}}{(1 + t^2)(1 - t^{12})}.
\end{align*}
\end{theorem}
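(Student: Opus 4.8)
The plan is to turn the statement into a pure generating-function count, since the explicit bases \eqref{eq_H0_m_even_n_odd} and \eqref{eq_H1_m_even_n_odd} already determine $H_0$ and $H_1$ completely; all that remains is to read off the Hodge degrees and sum. The first thing I would pin down is the dictionary between a basis generator $(k_1,k_2,k_3)$ and the Hodge degree (number of hairs) of the graph it represents. A generator of $H_0\subset\calC_0$ of polynomial degree $k_1+k_2+k_3$ is a hairy $\Theta$-graph with exactly $k_1+k_2+k_3$ hairs, so its Hodge degree is $k_1+k_2+k_3$; a generator of $H_1\subset\calC_1$ of polynomial degree $k_1+k_2+k_3$ represents a graph carrying one extra hair on the left vertex of $\Theta$ (the hair recorded by the pair $A_1a_1$ opening the ordering of the orientation set of $\calC_1$, cf.\ Section~\ref{s_2loops} and the remark following Theorem~\ref{t_homology_m_odd_n_odd}), so its Hodge degree is $k_1+k_2+k_3+1$. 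Every generator appearing in \eqref{eq_H0_m_even_n_odd} and \eqref{eq_H1_m_even_n_odd} is homogeneous in this grading, so $h_0(t)$ and $h_1(t)$ are well defined.

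Next I would sum the resulting series. For $h_0(t)$ the basis \eqref{eq_H0_m_even_n_odd} splits into two families. The family $(k_2+1,k_2,k_3)$ with $k_2$ odd, $k_3\equiv 0,3\bmod 4$, $k_2>k_3$ has Hodge degree $2k_2+k_3+1$; writing $k_2=2a+1$ and absorbing the inequality $k_2>k_3$ through the substitutions $a=2b+c$ (when $k_3=4b$) and $a=2b+2+c$ (when $k_3=4b+3$), with $b,c\ge 0$, turns each subfamily into a product of two geometric series, contributing $\frac{t^3}{(1-t^4)(1-t^{12})}$ and $\frac{t^{14}}{(1-t^4)(1-t^{12})}$ respectively. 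The family $(k_3+1,k_3+1,k_3)$ with $k_3\equiv 3\bmod 4$ has Hodge degree $3k_3+2$, and putting $k_3=4b+3$ gives $\frac{t^{11}}{1-t^{12}}=\frac{t^{11}-t^{15}}{(1-t^4)(1-t^{12})}$. Adding the three pieces yields the stated $h_0(t)$. The computation of $h_1(t)$ from \eqref{eq_H1_m_even_n_odd} is entirely parallel: the family $(k_2,k_2,k_3)$ with $k_2$ even, $k_3\equiv 0\bmod 4$, $k_2\ge k_3$ has Hodge degree $2k_2+k_3+1$ and contributes $\frac{t}{(1-t^4)(1-t^{12})}$, and the family $(k_2+1,k_2+1,k_3)-(k_2+1,k_2,k_3+1)$ with $k_2$ odd, $k_3\equiv 3\bmod 4$, $k_2>k_3$ has Hodge degree $2k_2+k_3+3$ and contributes $\frac{t^{16}}{(1-t^4)(1-t^{12})}$; the sum is the stated $h_1(t)$.

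Finally $\chi(t)$ follows from Lemma~\ref{l_euler} (legitimate here since $\partial_2$ is injective: the vectors $fe_1$, $f\in\calB^{*\even}$, form part of a basis of $\calC_1$). With $N$ odd and $m$ even we have $(-1)^{N-1}=1$ and $(-1)^{N-m}=-1$, so $\chi(t)=h_0(-t)-h_1(-t)$; substituting the two formulas and using $1-t^4=(1-t^2)(1+t^2)$ to cancel the common factor $1-t^2$ gives $\chi(t)=\frac{t-t^{11}-t^{13}+t^{14}}{(1+t^2)(1-t^{12})}$, as claimed. The only genuinely delicate point in the argument is the Hodge-degree bookkeeping — keeping the $+1$ shift between $\calC_0$ and $\calC_1$, and linearizing each index set (three congruence conditions plus one strict/non-strict inequality) correctly into a product of geometric series — so I expect the main risk to be an off-by-one or a dropped/duplicated term, which I would guard against by checking the first few coefficients of $h_0$, $h_1$ and $\chi$ directly against the bases.
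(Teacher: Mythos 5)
Your proposal is correct and follows essentially the same route as the paper: it reads off the Hodge degrees from the explicit bases \eqref{eq_H0_m_even_n_odd} and \eqref{eq_H1_m_even_n_odd} (with the $+1$ shift for $\calC_1$ coming from the hair at the left vertex), sums the resulting geometric series to get $h_0$ and $h_1$, and then applies Lemma~\ref{l_euler} with $(-1)^{N-1}=1$, $(-1)^{N-m}=-1$ to obtain $\chi$. Your substitutions and lowest-degree generators ($(2,1,0)$, $(6,5,3)$, $(4,4,3)$, $(0,0,0)$, and the degree-$16$ element) match the paper's computation exactly.
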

\begin{proof}
We start with $h_0$. The first summand of~\eqref{eq_H0_m_even_n_odd} has the generating functions of dimensions
$$
\frac{t^3+t^{14}}{(1-t^4)(1-t^{12})}.
$$
In this formula $t^3$ and $t^{14}$ in the numerator correspond to $(2,1,0)$ and $(6,5,3)$ respectively. Then we can simultaneously increase $k_2$ and $k_3$ by 4, or just increase $k_2$ by 2. The first action gives the factor
$$
1+t^{12}+t^{24}+t^{36}+\ldots=\frac 1{1-t^{12}}.
$$
to our generating function.
The second action  gives the factor
$$
1+t^4+t^8+\ldots=\frac 1{1-t^4}.
$$
Similarly the second summand of~\eqref{eq_H0_m_even_n_odd} has the generating function of dimensions
$$
\frac{t^{11}}{1-t^{12}},
$$
where $t^{11}$ corresponds to $(4,4,3)$.

For $h_1(t)$, the first summand of~\eqref{eq_H1_m_even_n_odd} has the generating function of dimensions
$$
\frac t{(1-t^4)(1-t^{12})}.
$$
Here the numerator $t$ corresponds to $(0,0,0)$. Notice that the corresponding graph has exactly~1 hair growing from the left vertex of the $\Theta$ graph. For the second sum we get the function
$$
\frac{t^{16}}{(1-t^4)(1-t^{12})}.
$$
Here $t^{16}$ states for the element $2(6,6,3)-(6,5,4)$.

To compute $\chi(t)$, again we use Lemma~\ref{l_euler}.
\end{proof}


\subsection{When $N$ is even and $m$ is odd}
%

%

%

This case is similar to the one considered in the previous subsection. From Lemma~\ref{l_sign} one can see that our graph-complex  in this situation can be described as follows:
$$
0 \xrightarrow{\partial_3} \ASym^{*\even}\langle\xi_1,\xi_2,\xi_3\rangle\xrightarrow{\partial_2}\ASym\langle\xi_1,\xi_2,\xi_3\rangle
\xrightarrow{\partial_1}\ASym^{*\even}\langle\xi_1,\xi_2,\xi_3\rangle\xrightarrow{\partial_0}0.
$$
In the above $\ASym$ denotes the space of anti-symmetric polynomials in $\Q\langle\xi_1,\xi_2,\xi_3\rangle$. The superscript $*even$ indicates that we only take the subspace of $*$even ones, see Definition~\ref{d_*parity}. The subscript $>0$ indicates that the generating monomials should be of strictly positive degree. The space of anti-symmetric polynomials $\ASym\langle\xi_1,\xi_2,\xi_3\rangle$ is a bimodule over the algebra $Sym\langle\xi_1,\xi_2,\xi_3\rangle$. It is generated by
$$
\Delta_2=\xi_1\xi_2+\xi_2\xi_3+\xi_3\xi_1,
$$
and
$$
\Delta_3=\xi_1\xi_2\xi_3
$$
as a left (or right) module (and by $\Delta_2$ as a bimodule). But contrary to the classical case this action is not free.

\begin{lemma}\label{l_diff_m_odd_n_even}
For the case $m$ even and $N$ odd, the differentials in the two-loop hairy complex are described as follows:
\begin{align*}
\partial_2f&=(-1)^{|f|}2fe_1,\\
\partial_1f&=\frac 12(e_1f+A(e_1f))=\frac 12(e_1f-(-1)^{|f|}A(f)e_1)=[e_1f]_{*even}.
\end{align*}
\end{lemma}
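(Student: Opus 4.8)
The plan is to mirror the derivation of Lemma~\ref{l_diff_m_even_n_odd}, using the fact that when $N$ is even and $m$ is odd the only change relative to the $N$ odd, $m$ even case is that the generators $\xi_i$ are again odd (so the algebra $\Q\langle\xi_1,\xi_2,\xi_3\rangle$ and the anti-involution $A$ from Lemma~\ref{l_anti_aut} are literally the same), but the transposition sign in Lemma~\ref{l_sign}~(3) carries an extra factor and the edge-orientation sign in Lemma~\ref{l_sign}~(1) changes by swapping the roles of $m$ and $N$. Concretely, one should invoke Remark~\ref{r_diff_general}, which already records the universal formulas
$$
\partial_2 f=(-1)^{N+|f|}2fe_1,\qquad \partial_1 f=(-1)^{mN}\tfrac12\bigl(e_1f+A(e_1f)\bigr),
$$
valid in all four parity cases. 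Substituting $N$ even and $m$ odd gives $(-1)^{N+|f|}=(-1)^{|f|}$ and $(-1)^{mN}=1$, which is exactly the claimed pair of formulas; so the bulk of the work is simply to re-examine the sign bookkeeping behind Remark~\ref{r_diff_general} in this parity case.

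First I would recall, exactly as in the proof of Lemma~\ref{l_diff_m_odd_n_odd}, that $\partial_2$ expands the unique pair of $4$-valent vertices of a defect-$2$ hairy $\Theta$-graph; the two expansions agree by the vertical symmetry, giving the coefficient $2$, and the geometric effect of adding a hair on one of the three $\Theta$-edges is multiplication by $e_1=\xi_1+\xi_2+\xi_3$. Since $\xi_i$ are now odd, "multiplication by $e_1$" must be interpreted on the correct side: the construction in Section~\ref{s_graph} inserts the new hair so that the result is $f$ times $e_1$ on the right, with a Koszul sign $(-1)^{|f|}$ arising from moving the new external vertex and new edge (of total parity determined by $N$ even) past the orientation data of $f$. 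This is where the $(-1)^{|f|}$ in $\partial_2 f=(-1)^{|f|}2fe_1$ comes from; one checks it is the same computation as in Lemma~\ref{l_diff_m_even_n_odd} except that the ambient sign contributed by edge-degree $(N-1)$ and vertex-degree $-N$ has the opposite value, and these two changes cancel so as to leave $(-1)^{|f|}$ rather than $-(-1)^{|f|}$.

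For $\partial_1$, the defect-$1$ graph has a single $4$-valent vertex and a single trivalent non-hair vertex; expanding the $4$-valent one produces a new hair (multiplication by $e_1$) but now lands in $\calC_0$, where only $*$even elements survive, so one projects via $f\mapsto\frac12(f+A(f))$. I would then use Lemma~\ref{l_anti_aut} (that $A$ is the anti-involution sending $\xi_i\mapsto-\xi_i$) to rewrite $A(e_1f)=A(f)A(e_1)=-(-1)^{|f|}A(f)e_1$, giving the three equivalent expressions in the statement, and the overall sign $(-1)^{mN}=1$. The main obstacle is purely clerical: pinning down the global sign in front of each differential from the ordering conventions of the orientation set fixed in Section~\ref{s_2loops}, since several Koszul signs of comparable parity must be combined and one must be careful that the $m$- and $N$-dependent contributions land with the values claimed. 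Fortunately, as the footnote to Lemma~\ref{l_diff_m_odd_n_odd} notes, these signs (and the factor $2$) are irrelevant to the homology, so it suffices to verify the \emph{shape} of the formulas — right-multiplication by $e_1$, the projector $\frac12(1+A)$ — and to cite Remark~\ref{r_diff_general} for the precise signs. One then observes, exactly as after Lemma~\ref{l_diff_m_even_n_even}, that $\partial_1\partial_2=0$ follows immediately from Lemma~\ref{l_square} applied with $\omega=e_1$.
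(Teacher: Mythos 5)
Your proposal is correct and follows essentially the same route as the paper, which in fact gives no separate proof of this lemma: it likewise defers the precise signs to Remark~\ref{r_diff_general} (where they are declared irrelevant to the homology computations), and the shape of the differentials --- the factor $2$ from the two symmetric expansions, right multiplication by $e_1$, the projector $\frac12(1+A)$, and $\partial^2=0$ via Lemma~\ref{l_square} --- is obtained exactly as in the proof of Lemma~\ref{l_diff_m_odd_n_odd}. One cosmetic slip: the intermediate equality $A(e_1f)=A(f)A(e_1)$ omits the Koszul sign $(-1)^{|f|}$ required by Lemma~\ref{l_anti_aut}, although your final expression $-(-1)^{|f|}A(f)e_1$ is the correct one.
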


See Remark~\ref{r_diff_general} for a general rule of signs.

Now that we have fully described our complex, we may compute $H_1$ and $H_0$.  We do this in a similar manner to the previous case: we construct a basis for each space by first defining the image of our map $\partial$, then adding in  other elements in our space in a way that each possible leading term appears exactly once. For a monomial $\xi_1^{k_1}\xi_2^{k_2}\xi_3^{k_3}$ its anti-symmetrization with coefficient one in front of $\xi_1^{k_1}\xi_2^{k_2}\xi_3^{k_3}$ will be denoted $[k_1,k_2,k_3]$. For example $\Delta_2=[1,1,0]$, $\Delta_3=[1,1,1]$.  The set
$$
\calB=\{[k_1,k_2,k_3]|\, k_1\geq k_2\geq k_3,\, \text{if $k_i=k_{i+1}$, then $k_i$ is odd, $i=1,2$}\}
$$
is a basis of  $\ASym\langle\xi_1,\xi_2,\xi_3\rangle$. Its subset consisting of $*$even monomials will be denoted by $\calB^{*\even}$. The latter set will be used as a basis for $\calC_2$. For a basis for $\calC_1$ we will take a union of the following sets
\begin{gather*}
\{fe_1 | f \in \calB\}, \\
\{[k_2 + 1, k_2, k_3] \, |\, k_2 \equiv 0 \bmod 2,\, k_2>k_3\}, \\
\{[k_2, k_2, k_3] \,|\, k_2 \equiv 1 \bmod 2,\, k_2\geq k_3\}.
\end{gather*}
\indent Since $\partial_2$ is up to a sign multiplication by $2e_1$ on the right, we have
$$
\im(\partial_2) = \langle fe_1 | f \in \calB^{*\even} \rangle.
$$
Thus, $\im(\partial_2)$ is a space spanned by a subset of basis elements. \\
\indent Now, we compute $\partial_1$ on the basis elements. Applying Lemma 5.15,
\begin{align*}
\partial_1(fe_1) = 0 \text{ for } f \in \calB^{*\even}; \\
\partial_1(fe_1) = e_1fe_1 \text{ for } f \in \calB^{*\odd}.
\end{align*}
We continue our computation of $\partial_1$ on the basis elements:
\begin{center}
\begin{tabular}{l|l|l}
$X$&$\partial_1(X)$&\\
\hline
$[k_2+1,k_2,k_3]$ & $-2[k_2+1,k_2+1,k_3]-[k_2+1,k_2,k_3+1]$ & $k_2\text{ even}, k_3\equiv0\bmod4, k_2 > k_3$ \\
$[k_2+1, k_2, k_3]$ & $[k_2+2, k_2, k_3]-[k_2+1, k_2, k_3+1]$ & $k_2\text{ even}, k_3\equiv1\bmod4, k_2 > k_3$ \\
$[k_2+1, k_2, k_3]$ & $[k_2+2,k_2,k_3]$ & $k_2\text{ even}, k_3\equiv2\bmod4, k_2 > k_3$ \\
$[k_2+1, k_2, k_3]$ & $2[k_2+1,k_2+1,k_3]$ & $k_2\text{ even},k_3\equiv3\bmod4, k_2 > k_3$ \\
$[k_2, k_2, k_3]$ & $[k_2+1,k_2,k_3]$ & $k_2\text{ odd}, k_3\equiv0\bmod4, k_2 \geq k_3$ \\
$[k_2, k_2, k_3]$ & $0$ & $ k_2 \text{ odd}, k_3 \equiv1\bmod4, k_2 \geq k_3$ \\
$[k_2, k_2, k_3]$ & $[k_2,k_2,k_3+1]$ & $k_2\text{ odd}, k_3\equiv2\bmod4, k_2 > k_3+1$ \\
$[k_3+1,k_3+1,k_3]$ & $3[k_3+1,k_3+1,k_3+1])$ & $ k_3\equiv2\bmod4$\\
$[k_2, k_2, k_3]$ & $[k_2+1,k_2,k_3]+[k_2,k_2,k_3+1]$ & $k_2\text{ odd}, k_3\equiv3\bmod4, k_2 > k_3$\\
$[k_3,k_3,k_3]$ & $[k_3+1,k_3,k_3]$ &$k_3\equiv3\bmod4 $
\end{tabular}
\end{center}
It is easy to see that the  images from the fourth line are twice the images of the seventh line. If one excludes the elements $\{[k_2+1, k_2+1, k_3] | k_2 \equiv 0 \bmod2, k_3 \equiv 3\bmod4\}$ from the basis of $\calC_1$ (i.e. the elements that appear on the fourth line above), then all the non-zero images of such set have different leading terms. Thus these images are linearly independent. To complete the obtained set to a basis of $\calC_0$ we add elements
\begin{align*}
&\{[k_2+1, k_2, k_3] \,|\, k_2 \equiv 0 \bmod2, k_3 \equiv 1 \bmod4, k_2 > k_3\} \cup \\ &\{[k_2+1, k_2, k_3] \,|\, k_2 \equiv 0 \bmod2, k_3 \equiv 2 \bmod4, k_2 > k_3 + 1 \} \cup \{[k_3+1, k_3+1, k_3] \,|\, k_3 \equiv 0 \bmod4\}.
\end{align*}
This immediately implies the above set is a basis of cycles in $H_0$, In other words,
\begin{multline}
H_0 = \frac{\ker(\partial_0)}{\im(\partial_1)} = \langle [k_2+1, k_2, k_3] \,|\, k_2 \equiv 0 \bmod2, k_3 \equiv 1 \bmod4, k_2 > k_3 \rangle \oplus \\ \langle [k_2+1, k_2, k_3] | k_2 \equiv 0 \bmod2, k_3 \equiv 2 \bmod4, k_2 > k_3 \rangle \oplus \langle [k_3+1, k_3+1, k_3] | k_3 \equiv 0 \bmod4 \rangle.
\label{eq_H0_m_odd_n_even}
\end{multline}
We will now compute $H_1$. It is easy to see from above that
\begin{multline*}
\ker(\partial_1) = \im(\partial_2) \oplus \langle 2[k_2+1, k_2+1, k_3] - [k_2+1, k_2, k_3+1] | k_2 \equiv 0 \bmod2, k_3 \equiv 2 \bmod4, k_2 > k_3 \rangle \oplus \\ \langle [k_2, k_2, k_3] | k_2 \equiv 1 \bmod2, k_3 \equiv 1 \bmod4, k_2 \geq k_3 \rangle.
\end{multline*}
Thus we get
\begin{multline}
H_1 = \langle 2[k_2+1, k_2+1, k_3] - [k_2+1, k_2, k_3+1] | k_2 \equiv 0 \bmod2, k_3 \equiv 2 \bmod4, k_2 > k_3 \rangle \oplus \\ \langle [k_2, k_2, k_3] | k_2 \equiv 1 \bmod2, k_3 \equiv 1 \bmod4, k_2 \geq k_3 \rangle.
\label{eq_H1_m_odd_n_even}
\end{multline}

\begin{theorem}\label{t_homology_m_odd_n_even}
In case $m$ is odd and $N$ is even, the generating functions of the two-loop graph-homology $H_0$ and $H_1$, and of the Euler characteristics, are
\begin{align*}
h_0(t) &= \frac{t^2 + t^{11}}{(1-t^4)(1-t^{12})}, \\
h_1(t) &= \frac{t^4 + t^{13}}{(1-t^4)(1-t^{12})}, \\
\chi(t) &= \frac{-t^2  + t^{11}}{(1+t^2)(1-t^{12})}.
\end{align*}
\end{theorem}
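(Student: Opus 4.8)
The spaces $H_0$ and $H_1$ have already been written down as explicit spans of basis elements in~\eqref{eq_H0_m_odd_n_even} and~\eqref{eq_H1_m_odd_n_even}, so the plan is simply to turn those descriptions into generating functions and then to read off $\chi(t)$ from Lemma~\ref{l_euler}. The one translation needed is the bookkeeping of Hodge degree: exactly as in the three cases already treated, a graph represented by the anti-symmetrization $[k_1,k_2,k_3]$ of $\xi_1^{k_1}\xi_2^{k_2}\xi_3^{k_3}$ has Hodge degree $k_1+k_2+k_3$ when it lies in $\calC_0$ and Hodge degree $k_1+k_2+k_3+1$ when it lies in $\calC_1$, the extra unit being the hair on the left vertex of the $\Theta$-frame. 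Note also that the element $2[k_2+1,k_2+1,k_3]-[k_2+1,k_2,k_3+1]$ appearing in~\eqref{eq_H1_m_odd_n_even} is homogeneous, both terms having total $\xi$-degree $2k_2+k_3+2$, so it contributes $t^{2k_2+k_3+3}$.

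I would first do $h_1(t)$. For the summand $\langle[k_2,k_2,k_3]\mid k_2\text{ odd},\ k_3\equiv1\bmod4,\ k_2\geq k_3\rangle$, set $k_3=4j+1$; the constraints that $k_2$ be odd and $k_2\geq k_3$ give $k_2=4j+1+2i$ with $i,j\geq0$, and the Hodge degree $2k_2+k_3+1$ equals $12j+4i+4$, so this summand contributes $t^4/((1-t^4)(1-t^{12}))$. For the other summand set $k_3=4j+2$; then $k_2$ even and $k_2>k_3$ force $k_2=4j+4+2i$, the Hodge degree $2k_2+k_3+3$ equals $12j+4i+13$, and the contribution is $t^{13}/((1-t^4)(1-t^{12}))$. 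These already share a denominator, so $h_1(t)=(t^4+t^{13})/((1-t^4)(1-t^{12}))$.

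Next I would do $h_0(t)$ from the three summands of~\eqref{eq_H0_m_odd_n_even}. The summand $\langle[k_3+1,k_3+1,k_3]\mid k_3\equiv0\bmod4\rangle$, with $k_3=4j$, has Hodge degree $3k_3+2=12j+2$ and contributes $t^2/(1-t^{12})$. The summand with $k_3\equiv1\bmod4$ (so $k_3=4j+1$, $k_2=4j+2+2i$) has Hodge degree $2k_2+k_3+1=12j+4i+6$ and contributes $t^6/((1-t^4)(1-t^{12}))$; likewise the summand with $k_3\equiv2\bmod4$ contributes $t^{11}/((1-t^4)(1-t^{12}))$. Putting the three over the common denominator $(1-t^4)(1-t^{12})$ and using the identity $t^2(1-t^4)+t^6=t^2$ collapses the first two into $t^2/((1-t^4)(1-t^{12}))$; adding the last gives $h_0(t)=(t^2+t^{11})/((1-t^4)(1-t^{12}))$.

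Finally $\chi(t)$ follows from Lemma~\ref{l_euler}: here $N$ even and $m$ odd give $(-1)^{N-1}=-1$ and $(-1)^{N-m}=-1$, so $\chi(t)=h_1(-t)-h_0(-t)=(t^4-t^{13}-t^2+t^{11})/((1-t^4)(1-t^{12}))$. Since the numerator factors as $(1-t^2)(t^{11}-t^2)$ and $1-t^4=(1-t^2)(1+t^2)$, cancelling $1-t^2$ yields $\chi(t)=(-t^2+t^{11})/((1+t^2)(1-t^{12}))$. The argument is essentially bookkeeping; the one step that needs care is the Hodge-degree shift in $\calC_1$ together with the inequality constraints when setting up the geometric series, and the only genuine simplification is the collapse $t^2(1-t^4)+t^6=t^2$ in $h_0$ (mirrored by the $1-t^2$ cancellation in $\chi$).
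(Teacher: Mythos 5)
Your proposal is correct and follows essentially the same route as the paper: it converts the explicit bases in~\eqref{eq_H0_m_odd_n_even} and~\eqref{eq_H1_m_odd_n_even} into geometric-series generating functions (with the $+1$ Hodge-degree shift for $\calC_1$), matching the paper's lowest-degree representatives $[3,2,1]$, $[5,4,2]$, $[1,1,0]$, $[1,1,1]$, $2[5,5,2]-[5,4,3]$, and then applies Lemma~\ref{l_euler} for $\chi(t)$. The only cosmetic difference is that you spell out the algebraic collapse $t^2(1-t^4)+t^6=t^2$ and the $(1-t^2)$ cancellation in $\chi(t)$, which the paper leaves implicit.
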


\begin{proof}
We start with $h_0$. The first summand in~\eqref{eq_H0_m_odd_n_even} has the generating functions of dimensions
$$
\frac{t^6}{(1-t^4)(1-t^{12})}.
$$
In this formula $t^6$  in the numerator corresponds to $[3, 2, 1]$. The denominator is obtained exactly as described in the proof of Theorem~\ref{t_homology_m_even_n_odd}. The second summand has the generating function of the dimensions
$$
\frac{t^{11}}{(1-t^4)(1-t^{12})},
$$
where $t^{11}$ comes from $[5,4,2]$.
 The last summand of~\eqref{eq_H0_m_odd_n_even} has the generating function of dimensions
$$
\frac{t^2}{1-t^{12}},
$$
where $t^2$ corresponds to $[1, 1, 0]$. \\
\indent For $h_1(t)$, the first summand of~\eqref{eq_H1_m_odd_n_even} has the generating function of dimensions
$$
\frac{t^{13}}{(1-t^4)(1-t^{12})}.
$$
Here the numerator $t^{13}$ corresponds to $2[5, 5, 2] - [5, 4, 3]$. For the second summand we get the function
$$
\frac{t^4}{(1-t^4)(1-t^{12})}.
$$
Here $t^4$  stands for the element $[1, 1, 1]$. \\
\indent To compute $\chi(t)$, again we use Lemma~\ref{l_euler}.
\end{proof}

\section{Concentration of Homology}
We will now examine the concentration of homology in all four cases.

We will first focus on the case where both $N$ and $m$ are odd. Recall that the formula for the graded dimension of $H_0$ is
$$
h_0(t)=\frac{1}{(1-t^2)(1-t^6)}-1
$$

and the graded dimension for $H_1$ is

$$
h_1(t)= \frac{t}{(1-t^2)(1-t^6)}.
$$

\begin{theorem}\label{t_rank_m_odd_n_odd}
Let $a_k$ represent the sequence of coefficients generated by $h_0$, and let $b_k$ represent the sequence of coefficients generated by $h_1$. When $N$ and $m$ are odd, we have
$$
a_k=\begin{cases}
\lceil{\frac{k+1}{6}}\rceil &k \equiv 0 \bmod 2\\
0 &k \equiv 1 \bmod 2\\
\end{cases},
$$
and
$$
b_k=\begin{cases}
0 &k \equiv 0 \bmod 2\\
\lceil{\frac{k}{6}}\rceil &k \equiv 1 \bmod 2\\
\end{cases}.
$$
\end{theorem}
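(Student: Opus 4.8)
**The plan is to extract the coefficients of the rational function $\frac{1}{(1-t^2)(1-t^6)}$ directly by a partial-fractions / power-series argument.** Since $h_0(t) = \frac{1}{(1-t^2)(1-t^6)} - 1$ and $h_1(t) = \frac{t}{(1-t^2)(1-t^6)}$, both reduce to understanding the coefficient sequence $c_k$ of $\frac{1}{(1-t^2)(1-t^6)}$; then $a_k = c_k$ for $k>0$ (and $a_0=0$) while $b_k = c_{k-1}$. Because all powers appearing in $(1-t^2)(1-t^6)$ are even, $c_k=0$ for $k$ odd, which immediately gives the vanishing of $a_k$ for odd $k$ and of $b_k$ for even $k$. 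So everything comes down to computing $c_{2j}$ for $j\geq 0$, i.e. the number of ways to write $2j = 2\alpha + 6\gamma$ with $\alpha,\gamma\in\mathbb{N}_0$, equivalently $j = \alpha + 3\gamma$; this is the number of $\gamma\in\mathbb{N}_0$ with $3\gamma\leq j$, namely $c_{2j}=\lfloor j/3\rfloor + 1$.

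\textbf{Next I would convert $\lfloor j/3\rfloor+1$ into the ceiling expression stated in the theorem.} For $k\equiv 0 \bmod 2$, write $k=2j$; we need $a_k = c_{2j} = \lfloor j/3\rfloor + 1 = \lceil (j+1)/3 \rceil$. Then substituting $j = k/2$ and simplifying $\lceil (k/2+1)/3\rceil = \lceil (k+2)/6 \rceil = \lceil (k+1)/6 \rceil$ (the last equality holds because $k$ is even, so $k+1$ is odd and is never divisible by $6$, making $\lceil(k+1)/6\rceil = \lceil(k+2)/6\rceil$) yields the claimed formula for $a_k$. Similarly for $k\equiv 1\bmod 2$, $b_k = c_{k-1}$ with $k-1$ even, $k-1=2j$, $j=(k-1)/2$, so $b_k = \lfloor j/3\rfloor + 1 = \lceil (j+1)/3\rceil = \lceil (k+1)/6\rceil = \lceil k/6\rceil$, again using that $k$ odd forces $k+1$ not divisible by $6$ so the ceiling is unchanged when passing from $k+1$ to $k$.

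\textbf{The main obstacle — really the only nonroutine point — is the bookkeeping with floors and ceilings} to land exactly on the stated normalization $\lceil\frac{k+1}{6}\rceil$ and $\lceil\frac{k}{6}\rceil$ rather than some equivalent-but-different expression; one must be careful about the parity-dependent shifts and about the fact that a ceiling of a non-integer argument is insensitive to small perturbations. A clean way to organize this, which I would adopt, is to avoid fractions of fractions entirely: observe that the coefficient $c_n$ of $\frac{1}{(1-t^2)(1-t^6)}$ equals $\#\{\gamma\geq 0 : 6\gamma \leq n\} = \lfloor n/6\rfloor + 1$ when $n$ is even and $0$ when $n$ is odd, and then note $\lfloor n/6\rfloor + 1 = \lceil (n+1)/6\rceil$ whenever $6\nmid (n+1)$, which always holds here since $n$ even forces $n+1$ odd. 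Applying this with $n=k$ gives $a_k$ and with $n=k-1$ gives $b_k=\lceil k/6\rceil$, completing the proof. I would present it in roughly this order: (1) reduce to $c_n$; (2) kill the odd cases by parity; (3) count lattice points for the even cases; (4) rewrite as a ceiling.
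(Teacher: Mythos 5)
Your proposal is correct and takes essentially the same route as the paper: the paper's proof simply records that $\frac{1}{(1-t^2)(1-t^6)}$ has zero coefficients in odd degrees and coefficient $\lceil\frac{k+1}{6}\rceil$ in even degrees, which is exactly what you establish via the lattice-point count $\#\{(\alpha,\gamma)\in\mathbb{N}_0^2 : 2\alpha+6\gamma=k\}=\lfloor k/6\rfloor+1$ together with the shift $b_k=c_{k-1}$. One parenthetical in your middle paragraph is off --- the equality $\lceil\frac{k+1}{6}\rceil=\lceil\frac{k}{6}\rceil$ for odd $k$ holds because $6\nmid k$, not because ``$k$ odd forces $k+1$ not divisible by $6$'' (false for $k=5$) --- but your final ``clean'' organization invokes the correct condition, so nothing essential is affected.
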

\begin{proof}
Follows from the fact that the expansion of $\frac{1}{(1-t^2)(1-t^6)}$ has coefficient zero in front of any odd exponent $t^k$  and coefficient $\lceil{\frac{k+1}{6}}\rceil$ in front of any even exponent.
\end{proof}

The next case we examine is when both $N$ and $m$ are even. We have
\begin{align*}
h_0(t) &= \frac{t^6}{(1-t^2)(1-t^6)}, \\
h_1(t) &= \frac{t^7}{(1-t^2)(1-t^6)}.\\
\end{align*}

\begin{theorem}\label{t_rank_m_even_n_even}
As before, let $a_k$ represent the sequence of coefficients generated by $h_0$, and let $b_k$ represent the sequence of coefficients generated by $h_1$. Then, when $N$ and $m$ are even,
\begin{align*}
a_k &= \begin{cases}
\lfloor{\frac{k}{6}}\rfloor &k \equiv 0 \bmod 2 \\
0 & k \equiv 1 \bmod 2\\
\end{cases}\\
b_k &= \begin{cases}
0 &k \equiv 0 \bmod 2\\
\lfloor{\frac{k}{6}}\rfloor &k \equiv 1 \bmod 2\\
\end{cases}
\end{align*}
\end{theorem}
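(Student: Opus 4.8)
The plan is to reduce both statements to a single power-series expansion. Writing $S(t):=\frac1{(1-t^2)(1-t^6)}$, the generating functions displayed above are $h_0(t)=t^6S(t)$ and $h_1(t)=t^7S(t)$, so everything follows once the coefficients of $S(t)$ are known. First I would observe that $S(t)$ involves only even powers of $t$: substituting $u=t^2$ turns it into $\frac1{(1-u)(1-u^3)}$. Expanding $\frac1{1-u}=\sum_{a\ge0}u^a$ and $\frac1{1-u^3}=\sum_{b\ge0}u^{3b}$ and multiplying, the coefficient $c_n:=[u^n]\frac1{(1-u)(1-u^3)}$ is the number of $b\ge0$ with $3b\le n$, i.e. $c_n=\lfloor n/3\rfloor+1$. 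Hence $[t^k]S(t)=0$ for $k$ odd and $[t^{2n}]S(t)=\lfloor n/3\rfloor+1$.

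Next I would read off the coefficients by a degree shift. Since $h_0(t)=t^6S(t)$, we get $a_k=[t^{k-6}]S(t)$, which vanishes for $k$ odd, and for $k$ even equals $\lfloor(k-6)/6\rfloor+1$ when $k\ge6$ and $0$ when $k\in\{0,2,4\}$. Applying the elementary identity $\lfloor x\rfloor+1=\lfloor x+1\rfloor$ with $x=(k-6)/6$ collapses the first expression to $\lfloor k/6\rfloor$, and since $\lfloor k/6\rfloor=0$ for $k\in\{0,2,4\}$ the two branches agree, giving $a_k=\lfloor k/6\rfloor$ for all even $k$. Symmetrically, $h_1(t)=t^7S(t)$ gives $b_k=[t^{k-7}]S(t)$, which vanishes for $k$ even and equals $\lfloor(k-7)/6\rfloor+1$ for odd $k\ge7$ (and $0$ for $k\in\{1,3,5\}$); rewriting this as $\lfloor(k-1)/6\rfloor$ and checking the three residues $k\equiv1,3,5\bmod6$ shows it equals $\lfloor k/6\rfloor$, again consistently with the small cases.

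There is no real obstacle here; the only point requiring care is the bookkeeping at the boundary, where the shifted exponents $k-6$ and $k-7$ go negative for small $k$, together with the two floor-function identities, which I would justify by the explicit residue computation modulo $6$ rather than leave implicit. An alternative route is a partial-fraction decomposition of $S(t)$ over $\Q$ (the denominator factors as $(1-t^2)^2(1+t^2+t^4)$) followed by the same degree shifts, but the combinatorial count above is shorter and makes the parity pattern transparent, so that is the approach I would take.
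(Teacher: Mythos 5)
Your proof is correct and follows essentially the same route as the paper: expand the common factor $\frac{1}{(1-t^2)(1-t^6)}$ (whose coefficient at $t^{2n}$ you write as $\lfloor n/3\rfloor+1$, equivalent to the paper's $\lceil\frac{k+1}{6}\rceil$ at even $k$), then shift by the numerators $t^6$ and $t^7$ and simplify the floors. The only difference is that you spell out the boundary cases and floor identities that the paper leaves implicit.
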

\begin{proof}
Similar to the previous theorem.
\end{proof}

We now turn to the cases where $N+m$ is odd, beginning with $N$ odd and $m$ even.  Our formulas for graded dimension are

$$
h_0(t) = \frac{t^3 + t^{11} + t^{14} - t^{15}}{(1-t^4)(1-t^{12})},
$$
and the graded dimension for $H_1$ is
$$
h_1(t) = \frac{t + t^{16}}{(1-t^4)(1-t^{12})}.
$$

\begin{theorem}\label{t_rank_m_even_n_odd}
When $N$ is odd and $m$ is even, using the same notation as above, we have
$$
a_k=\begin{cases}
0 &k\equiv1,0 \bmod4\\
\lfloor{\frac{k}{12}}\rfloor &k\equiv 2 \bmod4\\
\lceil{\frac{k+2}{12}}\rceil  &k\equiv 3 \bmod 4\\
\end{cases}
$$
and
$$
b_k=\begin{cases}
0 &k\equiv2,3 \bmod4\\
\lfloor{\frac{k-1}{12}}\rfloor &k\equiv 0 \bmod4\\
\lceil{\frac{k}{12}}\rceil  &k\equiv 1 \bmod 4\\
\end{cases}
$$
\end{theorem}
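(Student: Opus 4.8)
The plan is to prove both formulas by extracting coefficients directly from the rational generating functions, exploiting that $h_0$ and $h_1$ have the common denominator $(1-t^4)(1-t^{12})$. The single input I would set up first is the expansion
\[
\frac{1}{(1-t^4)(1-t^{12})}=\sum_{j\ge 0}\left(\left\lfloor\tfrac j3\right\rfloor+1\right)t^{4j},
\]
valid because the coefficient of $t^{4j}$ counts the pairs $(a,b)\in\Z_{\ge 0}^2$ with $a+3b=j$, of which there are $\lfloor j/3\rfloor+1$, while coefficients in degrees not divisible by $4$ vanish. Equivalently, for $c\ge 0$ the coefficient of $t^k$ in $t^c\bigl((1-t^4)(1-t^{12})\bigr)^{-1}$ equals $\lfloor(k-c)/12\rfloor+1$ when $k\equiv c\bmod 4$ and $k\ge c$, and $0$ otherwise; I would also record that $\lfloor(k-c)/12\rfloor+1$ already equals $0$ throughout $c-12\le k<c$, so for the exponents $c\in\{1,3,11,14,15,16\}$ appearing below this one formula is correct for every $k\ge 1$ in the relevant residue class modulo $4$. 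Everything else is multiplying this series by the two numerators and collecting by residue class modulo $4$.

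For $h_1(t)=(t+t^{16})\bigl((1-t^4)(1-t^{12})\bigr)^{-1}$ I would note that $t$ sends exponents to the class $1\bmod 4$ and $t^{16}$ to the class $0\bmod 4$, so the contributions are disjoint and $b_k=0$ for $k\equiv 2,3\bmod 4$. For $k\equiv 1\bmod 4$ one obtains $b_k=\lfloor(k-1)/12\rfloor+1$; since $12\nmid k$ in this class, $\lfloor(k-1)/12\rfloor=\lfloor k/12\rfloor$ and $\lceil k/12\rceil=\lfloor k/12\rfloor+1$, hence $b_k=\lceil k/12\rceil$. For $k\equiv 0\bmod 4$ only $t^{16}$ contributes, so $b_k=\lfloor(k-16)/12\rfloor+1=\lfloor(k-4)/12\rfloor$, and writing $k=12q+r$ with $r\in\{0,4,8\}$ and evaluating in each subcase identifies this with the stated $\lfloor(k-1)/12\rfloor$.

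For $h_0(t)=(t^3+t^{11}+t^{14}-t^{15})\bigl((1-t^4)(1-t^{12})\bigr)^{-1}$ the monomials $t^3,t^{11},t^{15}$ lie in the class $3\bmod 4$ and $t^{14}$ in the class $2\bmod 4$, so $a_k=0$ for $k\equiv 0,1\bmod 4$. For $k\equiv 2\bmod 4$ only $t^{14}$ contributes, giving $a_k=\lfloor(k-14)/12\rfloor+1=\lfloor(k-2)/12\rfloor$, which equals $\lfloor k/12\rfloor$ after checking $r\in\{2,6,10\}$. For $k\equiv 3\bmod 4$ all of $t^3,t^{11},-t^{15}$ contribute, so
\[
a_k=\left\lfloor\tfrac{k-3}{12}\right\rfloor+\left\lfloor\tfrac{k-11}{12}\right\rfloor-\left\lfloor\tfrac{k-15}{12}\right\rfloor+1,
\]
and writing $k=12q+r$ with $r\in\{3,7,11\}$ the right-hand side evaluates to $q+1$, $q+1$, $q+2$ respectively, which is precisely $\lceil(k+2)/12\rceil$ in the three subcases; these evaluations stay valid at $q=0$, i.e.\ for $k=3,7,11$, since the floor terms with negative argument then equal $-1$.

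So the whole proof is a finite amount of bookkeeping with floor and ceiling functions; the only delicate point --- the closest thing to an obstacle --- is the behaviour at small $k$, where some numerator monomial has not yet ``switched on'', so one must confirm that the unified formula $\lfloor(k-c)/12\rfloor+1$ genuinely returns $0$ in that sub-threshold range instead of a spurious $-1$. That is exactly what the parenthetical checks above ensure, and once it is in place the displayed identities are immediate and the theorem follows.
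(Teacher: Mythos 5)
Your proposal is correct and follows essentially the same route as the paper: extract coefficients of $h_0$ and $h_1$ by expanding $\bigl((1-t^4)(1-t^{12})\bigr)^{-1}$ (your $\lfloor j/3\rfloor+1$ coefficient is the paper's $\lceil(k+1)/12\rceil$ for $k=4j$) and then sorting the numerator monomials by residue class modulo $4$. The only difference is that you carry out explicitly the floor/ceiling bookkeeping and the small-$k$ threshold checks that the paper leaves to the reader, and these all check out against the stated formulas and the tables.
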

\begin{proof}
Follows from the fact that $\frac 1{(1-t^4)(1-t^{12})}$ has coefficient zero in front of any exponent $t^k$ with $k$ non-divisible by~4, and coefficient $\lceil{\frac{k+1}{12}}\rceil$ for $k$ divisible by~4.
\end{proof}

Finally, when $N$ is even and $m$ is odd, recall that the graded dimension of $H_0$ is
$$
h_0(t) = \frac{t^2+t^{11}}{(1-t^4)(1-t^{12})},
$$
and the graded dimension for $H_1$ is
$$
h_1(t) = \frac{t^4+t^{13}}{(1-t^4)(1-t^{12})}.
$$
Again, we expand these series with respect to $t$ to find a general formula to compute the coefficient for a given exponent.

\begin{theorem}\label{t_rank_m_odd_n_even}
When $N$ is even and $m$ is odd, we have
$$
a_k=\begin{cases}
0 &k\equiv1,0 \bmod4\\
\lfloor{\frac{k+1}{12}}\rfloor &k\equiv 3 \bmod4\\
\lceil{\frac{k}{12}}\rceil  &k\equiv 2 \bmod 4\\
\end{cases},
$$
where $a_k$ represents the coefficient of $H_0$, and
$$
b_k=\begin{cases}
0 &k\equiv2,3 \bmod4\\
\lfloor{\frac{k}{12}}\rfloor &k\equiv1 \bmod4\\
\lceil{\frac{k}{12}}\rceil  &k\equiv 0 \bmod 4\\
\end{cases}.
$$
where $b_k$ represents the coefficient of $H_1$.
\end{theorem}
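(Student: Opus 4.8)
The plan is to read off the coefficient sequences $(a_k)$ and $(b_k)$ directly from the closed forms for $h_0(t)$ and $h_1(t)$ obtained in Theorem~\ref{t_homology_m_odd_n_even}, exactly as in the proof of Theorem~\ref{t_rank_m_even_n_odd}. The only input required is the elementary expansion
$$
G(t):=\frac{1}{(1-t^4)(1-t^{12})}=\sum_{i,j\geq 0}t^{4i+12j},
$$
so that the coefficient of $t^n$ in $G(t)$ equals the number of pairs $(i,j)\in\mathbb{N}_0^2$ with $4i+12j=n$: it is $0$ unless $4\mid n$, in which case it equals $\lfloor n/12\rfloor+1=\lceil(n+1)/12\rceil$. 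This is precisely the fact already used in the proof of Theorem~\ref{t_rank_m_even_n_odd}.

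Next I would write $h_0(t)=(t^2+t^{11})\,G(t)$ and $h_1(t)=(t^4+t^{13})\,G(t)$, so that $a_k=[t^{k-2}]G+[t^{k-11}]G$ and $b_k=[t^{k-4}]G+[t^{k-13}]G$. Since $G$ is supported in degrees divisible by $4$, in each of these sums the two summands live in disjoint residue classes modulo $4$, so at most one of them contributes. This gives immediately: $a_k=0$ for $k\equiv 0,1\bmod 4$, $a_k=\lfloor(k-2)/12\rfloor+1$ for $k\equiv 2\bmod 4$, and $a_k=\lfloor(k-11)/12\rfloor+1$ for $k\equiv 3\bmod 4$ (to be read as $0$ when $k<11$, i.e.\ for $k=3,7$); likewise $b_k=0$ for $k\equiv 2,3\bmod 4$, $b_k=\lfloor(k-4)/12\rfloor+1$ for $k\equiv 0\bmod 4$ (read as $0$ for $k=0$), and $b_k=\lfloor(k-13)/12\rfloor+1$ for $k\equiv 1\bmod 4$ (read as $0$ for $k=1,5,9$).

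The last step is to reconcile these raw expressions with the floor/ceiling normalizations displayed in the statement. Using $\lceil n/12\rceil=\lfloor(n-1)/12\rfloor+1$ and the fact that $\lfloor\cdot\rfloor$ is unchanged by adding an integer, each of the needed identities---$\lfloor(k-2)/12\rfloor+1=\lceil k/12\rceil$ for $k\equiv 2\bmod 4$, $\lfloor(k-11)/12\rfloor+1=\lfloor(k+1)/12\rfloor$, $\lfloor(k-4)/12\rfloor+1=\lceil k/12\rceil$ for $k\equiv 0\bmod 4$, and $\lfloor(k-13)/12\rfloor+1=\lfloor k/12\rfloor$ for $k\equiv 1\bmod 4$---is either a pure integer shift or reduces to the observation that a short block of consecutive integers just below $k$ contains no multiple of $12$; the latter follows from the relevant congruence of $k$ modulo $4$ (an integer $\equiv 2\bmod 4$ is never $\equiv 1\bmod 12$, an integer $\equiv 0\bmod 4$ is never $\equiv 1,2,3\bmod 12$, an integer $\equiv 1\bmod 4$ is never divisible by $12$).

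The only point where genuine care is needed---and the step I expect to be the main, if still modest, obstacle---is the boundary behavior at small $k$, where one of the two shifted copies of $G$ has not yet begun to contribute; I would dispatch this simply by checking the claimed closed forms directly at the few exceptional values ($k=0,2,6,11$ for $a_k$ and $k=0,4,9,13$ for $b_k$). An alternative that sidesteps the ad hoc rewriting entirely is to observe that the coefficient sequences of $h_0$ and $h_1$ are eventually quasi-polynomial of period $12$ and to compare one full period term by term; the argument above is shorter. Finally, the resulting formulas should be consistent, through Lemma~\ref{l_euler}, with the Euler-characteristic generating function $\chi(t)$ of Theorem~\ref{t_homology_m_odd_n_even}, which provides a convenient independent check.
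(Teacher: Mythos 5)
Your proposal is correct and follows the same route as the paper, which proves this theorem (via the reference ``similar to the previous theorem'') by expanding $\frac{1}{(1-t^4)(1-t^{12})}$, whose $t^n$-coefficient vanishes unless $4\mid n$ and equals $\lceil\frac{n+1}{12}\rceil$ otherwise, and reading off the coefficients of $h_0$ and $h_1$ from Theorem~\ref{t_homology_m_odd_n_even}. Your version simply spells out the residue-class bookkeeping and the small-$k$ boundary checks that the paper leaves implicit.
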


\begin{proof}
Similar to the previous theorem.
\end{proof}

\begin{remark}
Notice that, for a given Hodge grading, in all the four cases the homology is concentrated in a single degree.
\end{remark}


\section*{Acknowledgements}
All the authors acknowledge NSF for the support. The idea for this work appeared during Conant's visit to K-State in 2009  supported by the Midwest Topology Network grant DMS~0844249. At that moment Conant and Turchin made computations in the \lq\lq easy case" of $m+N$ even. Later on Costello and Weed were working with Turchin on this project supported by the summer REU program DMS~1004336. Costello and Weed redid the computation that were previously done and finished them by computing the \lq\lq difficult case" of $m+N$ odd. Costello and Weed thank K-State for hospitality and also Baltazar Chavez-Diaz -- another REU student, for discussions and his contribution to the project.  Finally Turchin was also partially supported   by DMS~0967649 and by the Max Planck Institute for Mathematics in Bonn, where he was working on the last version of this paper, and which he thanks  for hospitality.

\appendix

\section*{Tables}

In this appendix section we present tables of ranks of the 2-loops graph-homology in small Hodge degrees. Notice that the results of our computations of Euler characteristics (the third column in each table) confirm previous computations --- the diagonal  $s=t+1$ in~\cite[Tables~1,~3,~5,~7]{AT2}.

\begin{table}[h!]
\parbox{.45\linewidth}{
\centering
\begin{tabular}{|c|c|c|c|}
\hline
Hodge Degree & $h_0(t)$ & $h_1(t)$ & $\chi(t)$ \\ \hline
1&0&1&-1 \\ \hline
2&1&0&1 \\ \hline
3&0&1&-1 \\ \hline
4&1&0&1 \\ \hline
5&0&1&-1 \\ \hline
6&2&0&2 \\ \hline
7&0&2&-2 \\ \hline
8&2&0&2 \\ \hline
9&0&2&-2 \\ \hline
10&2&0&2 \\ \hline
11&0&2&-2 \\ \hline
12&3&0&3 \\ \hline
13&0&3&-3 \\ \hline
14&3&0&3 \\ \hline
15&0&3&-3 \\ \hline
16&3&0&3 \\ \hline
17&0&3&-3 \\ \hline
18&4&0&4 \\ \hline
19&0&4&-4 \\ \hline
20&4&0&4 \\ \hline
21&0&4&-4 \\ \hline
22&4&0&4 \\ \hline
23&0&4&-4 \\ \hline
\end{tabular}
\caption{Concentration of homology when $N$ and $m$ are odd.}
}
\hfill
\parbox{.45\linewidth}{
\centering
\begin{tabular}{|c|c|c|c|}
\hline
Hodge Degree & $h_0(t)$ & $h_1(t)$ & $\chi(t)$ \\ \hline
1&0&0&0 \\ \hline
2&0&0&0 \\ \hline
3&0&0&0 \\ \hline
4&0&0&0 \\ \hline
5&0&0&0 \\ \hline
6&1&0&-1 \\ \hline
7&0&1&1 \\ \hline
8&1&0&-1 \\ \hline
9&0&1&1 \\ \hline
10&1&0&-1 \\ \hline
11&0&1&1 \\ \hline
12&2&0&-2 \\ \hline
13&0&2&2 \\ \hline
14&2&0&-2 \\ \hline
15&0&2&2 \\ \hline
16&2&0&-2 \\ \hline
17&0&2&2 \\ \hline
18&3&0&-3 \\ \hline
19&0&3&3 \\ \hline
20&3&0&-3 \\ \hline
21&0&3&3 \\ \hline
22&3&0&-3 \\ \hline
23&0&3&3 \\ \hline
\end{tabular}
\caption{Concentration of homology when $N$ and $m$ are even.}
}
\end{table}

\begin{table}[h!]
\parbox{.45\linewidth}{
\centering
\begin{tabular}{|c|c|c|c|}
\hline
Hodge Degree& $h_0(t)$ & $h_1(t)$ & $\chi(t)$ \\ \hline
1&0&1&1 \\  \hline
2&0&0&0 \\ \hline
3&1&0&-1 \\ \hline
4&0&0& 0\\ \hline
5&0&1&1 \\ \hline
6&0&0&0 \\ \hline
7&1&0&-1 \\ \hline
8&0&0&0 \\ \hline
9&0&1&1 \\ \hline
10&0&0&0 \\ \hline
11&2&0&-2 \\ \hline
12&0&0&0 \\ \hline
13&0&2&2 \\ \hline
14&1&0&1 \\ \hline
15&2&0&-2 \\ \hline
16&0&1&-1 \\ \hline
17&0&2&2 \\ \hline
18&1&0&1 \\ \hline
19&2&0&-2 \\ \hline
20&0&1&-1 \\ \hline
21&0&2&2 \\ \hline
22&1&0&1 \\ \hline
23&3&0&-3 \\
\hline
\end{tabular}
\caption{Concentration of homology when $N$ is odd and $m$ is even.}
}
\hfill
\parbox{.45\linewidth}{
\centering
\begin{tabular}{|c|c|c|c|}
\hline
Hodge Degree & $h_0(t)$ & $h_1(t)$ & $\chi(t)$ \\ \hline
1&0&0&0 \\ \hline
2&1&0&-1 \\ \hline
3&0&0&0 \\ \hline
4&0&1&1 \\ \hline
5&0&0&0 \\ \hline
6&1&0&-1 \\ \hline
7&0&0&0 \\ \hline
8&0&1&1 \\ \hline
9&0&0&0 \\ \hline
10&1&0&-1 \\ \hline
11&1&0&1 \\ \hline
12&0&1&1 \\ \hline
13&0&1&-1 \\ \hline
14&2&0&-2 \\ \hline
15&1&0&1 \\ \hline
16&0&2&2 \\ \hline
17&0&1&-1 \\ \hline
18&2&0&-2 \\ \hline
19&1&0&1 \\ \hline
20&0&2&2 \\ \hline
21&0&1&-1 \\ \hline
22&2&0&-2 \\ \hline
23&0&2&2 \\
\hline
\end{tabular}
\caption{Concentration of homology when $N$ is even and $m$ is odd.}
}
\end{table}
\end{document}